\numberwithin{equation}{section}
\newcommand\ov{\overline}
\renewcommand\t{\widetilde}
\renewcommand\r{\rangle}
\renewcommand\l{\langle}
\newcommand\dsize{\displaystyle}
\newcommand\supp{\operatorname{supp}}
\newcommand\cal{\mathcal}
\newcommand\R{\mathbb{R}}
\newcommand \C{\mathbb{C}}
\newcommand\Z{\mathbb{Z}}
\newcommand\N{\mathbb{N}}
\newcommand\E{\cal E}
 \newcommand\U{{ \cal U}}
\newcommand\G{{\bf\Gamma}}
\newcommand\B{\mathcal{B}}
\newtheorem{Thm}{Theorem}[section]
\newtheorem{Lemma}[Thm]{Lemma}
\newtheorem{Cor}[Thm]{Corollary}
\newtheorem{Prop}[Thm]{Proposition}
\theoremstyle{remark}
\begin{document}

\title[]{Methods of construction of exponential bases on  planar domains}
\author{Oleg Asipchuk}
\address{Oleg Asipchuk, Florida International University,
	Department of Mathematics,
	Miami, FL 33199, USA}
\email{aasip001@fiu.edu } 
\author{Laura De Carli}
\address{L.~De Carli, Florida International University,
Department of Mathematics,
Miami, FL 33199, USA}
\email{decarlil@fiu.edu}
\subjclass[2020]{Primary: 42C15  
	Secondary classification: 42C30.  }
 \begin{abstract}
  In this paper, we survey and refine several results -- some previously established in the literature -- that facilitate the construction of exponential bases on planar domains with explicit control over the associated frame bounds. We apply our techniques to construct well-conditioned exponential bases on certain planar sets that multi-tile the plane.  
\end{abstract} 
\maketitle
\section{Introduction}

  Let $\Lambda \subset \mathbb{R}^d$ be a discrete set and let $D \subset \mathbb{R}^d$ be a measurable set with positive Lebesgue measure. We consider the  exponential system   $\E=\{e^{2\pi i \lambda_n \cdot x}\}_{n\in\N}, \  x\in D$.
    
  A fundamental and deeply nontrivial question in harmonic analysis is whether the exponential system ${\cal E} $ forms a  Riesz basis for the Hilbert space $L^2(D)$.  We recall that     a sequence of vectors ${\cal V}= \{v_j\}_{j\in\N}$  in a separable Hilbert space  $H$ is a {\it  Riesz basis} if 
  it is complete and there exist  constants $A, \ B>0$    such that   for all finite sequences $ \{a_j\}_{j\in J}\subset\C $ 
  we have that $$
  	A  \sum_{j\in J}   |a_j|^2   \leq  \big\Vert \sum_{j\in J}  a_j  v_j \big\Vert^2  \leq B \sum_{j\in J} |a_j|^2.
$$
    The constants $A, B$ are called the {\it frame bounds}  of the basis. If  ${\cal V}$  is  made of othonormal vectors, then $A=B=1$.
    
 Exponential   bases   have a multitude of applications in mathematics, physics and in other applications.  Then allow to represent solutions  to the Schr\"odinger equation  and other wave equations in terms of  fundamental frequencies and  to reconstruct a functions (for example, a medical image) from its  Fourier  samples.  
  Bases for which  the ratio  of the frame bounds  $\frac BA$ is one or  close to one are called {\it well-conditioned} and are especially relevant in the applications   because  small errors in the samples  (e.g.  from numerical rounding) do not cause large errors in the reconstruction.
 
  Unfortunately there are   few known classes of domains $D$ for which exponential Riesz bases in $L^2(D)$ are known to exist. 
  Some of these notable cases are examined in \cite{AD,CC2018,CL2022,FM2021,KN2015,LS2001,P,Seip1995,Young2000}, among others.

  An important  family of  domains on  which exponential bases are known to exist are those that multi-tile $\mathbb{R}^d$ with a lattice of translations $L$. They are the sets $D$ for which the collection $\{D + \ell : \ell \in L\}$ covers $\mathbb{R}^d$ exactly $k$ times (with the possible exception of a set of measure zero) for some   integer $k\ge 1$   (see, e.g., \cite{GL2014,Kol2015}). 
  
   It is worth mentioning that Y. Lyubarskii and A. Rashkovskii constructed in \cite{LR2000}  an exponential Riesz basis for any convex symmetric polygon whose vertices lie on some lattice. This result was later extended in \cite{DLev2019}, where the existence of such bases was proven for all convex symmetric polygons, without any lattice assumption. These polygons multi-tile $\R^2$; the existence of exponential bases on regular polygons with an odd number of sides is still an open problem.

  \medskip
  % orthonormal exponential bases are especially desirable from both theoretical and computational perspectives. However, their existence is typically restricted to highly structured domains, such as intervals in one dimension or sets that tile space by translation.  See (...)

    While there are very few existence results on exponential bases, there are even fewer non-existence results. 
 G. Kozma, S. Nitzan, and A. Olevskii recently constructed   a disconnected bounded measurable set $S \subset \mathbb{R}$  with the property that no exponential system can form a Riesz basis for $L^2(S)$   \cite{KNO2021}.  
  
  In many of the aforementioned results, the existence of exponential Riesz bases is established using abstract or nonconstructive arguments  and the   frame  bounds of the basis 
  are often not computed explicitly.

  %An important problem in modern harmonic analysis is the construction of explicit exponential Riesz bases with frame bounds that satisfy $A \approx B$. Such systems are  called {\it well-conditioned} and are crucial for practical applications such as signal processing and numerical analysis, where numerical stability and resiliency to perturbations are big issues.
 %The development of such systems remains a vibrant area of ongoing research.

 \subsection{Our contribution}
 In this paper, we survey and refine several results that enable the construction of exponential bases on planar domain,  some of them well-conditioned,   with explicit frame bounds. 
 
 Our paper is organized as follows. In Section~2, we present the main tools used throughout the paper and  we complement them with examples which, to the best of our knowledge, are new to the literature.
 
 In Section~3, we apply the methods developed in Section~2, along with results on the eigenvalues of Vandermonde matrices discussed in Section~3.1, to construct a well-conditioned exponential basis on a remarkable non-convex domain.  We hope that our result will serve as a model for similar constructions on other types of domains.
 
 Section~4 is devoted to concluding remarks.

\medskip
\noindent
 {\it Acknowledgments}.  The first author would like to acknowledge the financial support provided by the Florida International University Graduate School Dissertation Fellowship.

\section*{Section 2}
 \setcounter{section}{2}
 In this section, we present results that illustrate how to construct exponential bases on planar domains and how to estimate their frame bounds.
 
 Every bounded planar domain can be approximated by a union of rectangles with sides parallel to the coordinate axes and vertices with rational coordinates. If $D$ is the limit of a sequence  of unions of rectangles $\{D_n\}_{n\in\N}$, then Lemma~\ref{L-limit-bases} shows that a Riesz basis of $L^2(D)$ can be obtained as the union of an increasing sequence of Riesz bases for the spaces $L^2(D_n)$; if  the frame  bounds of these bases are bounded above and below by constants independent of $n$,  then the frame  bounds of the basis of $L^2(D)$ are  also   bounded above and below by the same constants.

 Theorem~\ref{T1-N-interv} shows  how to construct exponential bases  on unions of rectangles with sides parallel to the coordinate axes and vertices with rational coordinates; it also shows that the optimal  frame  bounds  of these bases are  the extremal eigenvalues of certain matrices.
 
 We also present Theorem~\ref{T-Segm-rect}, which  shows how  exponential bases on unions of rectangles, along with their frame  bounds, can be derived from exponential bases on unions of line segments.

 It is also important to understand how subsets of an exponential basis behave under a restriction of the domain. Proposition~\ref{P-compl-Rs} refines results proved in \cite{BCNS2019} and \cite{MM2009}, and provides a hierarchy of implications showing how the frame properties of subsets of a basis $\mathcal{C}$ over a domain $D$ influence the complementary system $\mathcal{C}' = \mathcal{N} \setminus \mathcal{C}$ over the complement domain $D' = \Omega \setminus D$, and vice versa.

 \subsection{Notation and preliminaries}

 We have used   \cite{HJ2013} for standard linear algebra results and the excellent textbook  \cite{Heil}    for definitions and preliminary  material on bases and frames in Hilbert spaces. See also \cite{Chr2016} and   \cite{Young2000}.   
 
For a given  $  v \in\R^d$ and $\rho>0$, we  denote with  $d_\rho:\R^d\to\R^d $ and $\tau_v:\R^d\to\R^d$, $d_\rho(x)=\rho(x)$ and   $\tau_v(x)=v+x$,     the dilation  and translation operators. 

 In the rest of the paper when we refer to a domain   $D \subset \R^d$   we will always assume  that $D$ is  measurable  with  Lebesgue measure $|D| <\infty$. We  denote with $\chi_D :\R^d\to\R$, $\chi_D(x)=\begin{cases} 1 &  if \ x\in  D\cr 0& if \ x\not\in D
 	\cr\end{cases}$  the characteristic function of $D$, and we let 
 $d_\rho D =\{ \rho x \ : \:   x\in D\}$ and  $  \tau_v D =D+  v= \{x+  v \ : \  x\in D\}$.

 We term {\it multi-rectangle}   a finite union of disjoint rectangles with sides parallel to the  disjoint   intervals on the real line.  We denote with $Q_{d}$ the  unit cube $[ -\frac 12,  \,\frac 12)^d$.

 \medskip
 A sequence of vectors ${\cal V}= \{v_j\}_{j\in\N}$  in a separable Hilbert space  $H$ is a {\it frame} if 
 there exist  constants $A, \ B>0$    such that for every $w\in H$,
 \begin{equation}\label{e2-frame}
 	A\|w\|^2\leq  \sum_{j=1}^\infty |\l  w, v_j\r |^2\leq B \|w\|^2.
 \end{equation}
 Here, $\langle\ ,\ \rangle $  and $||\ ||=\sqrt{\l \ , \    \r} $    are the  inner product   and the norm  in $H$.  We say that  ${\cal V}$  is a  {\it Riesz sequence}  if  
 the following
 inequality  is satisfied  for all finite sequences $ \{a_j\}_{j\in J}\subset\C $ 
 \begin{equation}\label{e2- Riesz-sequence}
 	A  \sum_{j\in J}   |a_j|^2   \leq  \big\Vert \sum_{j\in J}  a_j  v_j \big\Vert^2  \leq B \sum_{j\in J} |a_j|^2 ,
 \end{equation}
 and we say that  ${\cal V}$  is a  {\it Bessel sequence} if only the right inequality in \eqref{e2- Riesz-sequence} is satisfied.
 A {\it Riesz basis} is a frame and a Riesz sequence,  and so both    \eqref{e2-frame} and \eqref{e2- Riesz-sequence} are satisfied.
 
 \medskip
  The following   lemma provides a necessary and sufficient condition for a set to form a Bessel sequence.  
 \begin{Lemma}\label{L-Bessel-seq}
 	Let $H$ be a Hilbert space and let  ${\cal V}=\{v_n\}_{n}\subset H$. Then,  ${\cal V}$ is a   Bessel sequence  with bound $B>0$ if and only if  for every set  of constants  $\dsize \{a_n\}\subset\C$ we have
 \begin{equation}\label{e3- Bessel-sequence} 
   \|\sum_n a_nv_n\|^2 \leq B\sum_n|a_n|^2.\end{equation} 
 	
 \end{Lemma}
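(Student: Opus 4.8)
\emph{Proof proposal.} The plan is to prove the two implications through the standard duality between the synthesis map $\{a_n\}\mapsto\sum_n a_nv_n$ and the analysis map $w\mapsto\{\l w,v_n\r\}$, using only the Cauchy--Schwarz inequality in $H$ and in $\ell^2$, together with the Cauchy criterion to justify convergence of infinite series.

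First I would prove that the Bessel bound implies \eqref{e3- Bessel-sequence}. Fix a finite index set $F$ and constants $\{a_n\}_{n\in F}$, and set $w=\sum_{n\in F}a_nv_n$. Expanding the norm and pulling the scalars out of the second slot of the inner product gives $\|w\|^2=\l w,w\r=\sum_{n\in F}\ov{a_n}\,\l w,v_n\r$. Applying Cauchy--Schwarz in $\ell^2(F)$ and then the hypothesis that ${\cal V}$ is Bessel with bound $B$ to the factor $\big(\sum_{n\in F}|\l w,v_n\r|^2\big)^{1/2}\le\sqrt B\,\|w\|$ yields $\|w\|^2\le\sqrt B\,\|w\|\big(\sum_{n\in F}|a_n|^2\big)^{1/2}$, i.e. $\|w\|^2\le B\sum_{n\in F}|a_n|^2$, which is \eqref{e3- Bessel-sequence} for finite sums. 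To pass to an arbitrary square-summable sequence $\{a_n\}$, I would apply this finite estimate to the tail differences $\big\|\sum_{n\in G\setminus F}a_nv_n\big\|^2\le B\sum_{n\in G\setminus F}|a_n|^2$ for $F\subset G$ finite; this shows that the partial sums of $\sum_n a_nv_n$ form a Cauchy sequence in $H$, so the series converges, and letting $F\to\N$ the inequality \eqref{e3- Bessel-sequence} survives the limit.

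Conversely, assume \eqref{e3- Bessel-sequence}. Fix $w\in H$ and a finite set $F$, and this time choose the test sequence $a_n=\l w,v_n\r$ for $n\in F$. From the identity $|\l w,v_n\r|^2=\l w,\,\l w,v_n\r v_n\r$ we get $\sum_{n\in F}|\l w,v_n\r|^2=\big\l w,\,\sum_{n\in F}\l w,v_n\r v_n\big\r$; Cauchy--Schwarz in $H$ bounds the right side by $\|w\|\cdot\big\|\sum_{n\in F}\l w,v_n\r v_n\big\|$, and \eqref{e3- Bessel-sequence} applied to this finite sequence bounds the last norm by $\sqrt B\,\big(\sum_{n\in F}|\l w,v_n\r|^2\big)^{1/2}$. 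Dividing through by $\big(\sum_{n\in F}|\l w,v_n\r|^2\big)^{1/2}$ gives $\sum_{n\in F}|\l w,v_n\r|^2\le B\|w\|^2$, and since $F$ was an arbitrary finite set, taking the supremum over $F$ produces $\sum_n|\l w,v_n\r|^2\le B\|w\|^2$, i.e. ${\cal V}$ is Bessel with bound $B$.

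The argument is essentially routine, and I expect no serious obstacle: the only point that genuinely requires care is that \eqref{e3- Bessel-sequence} must be understood as a statement about all square-summable $\{a_n\}$ with the series taken in the norm-convergent (in fact unconditionally convergent) sense, so the delicate step is the Cauchy-criterion passage from the finite-sum estimate to the infinite sum; everything else is bookkeeping with the inner-product conventions. Alternatively, the whole proof can be phrased operator-theoretically: \eqref{e3- Bessel-sequence} says exactly that the synthesis operator $T\colon\ell^2\to H$, defined first on finitely supported sequences and extended by density, is bounded with $\|T\|\le\sqrt B$, while ${\cal V}$ being Bessel with bound $B$ says that the analysis operator $T^*\colon H\to\ell^2$ is bounded with $\|T^*\|\le\sqrt B$; the equivalence is then just $\|T\|=\|T^*\|$. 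I would, however, keep the self-contained elementary version above.
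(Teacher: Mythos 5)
Your proposal is correct. The forward implication is essentially the paper's own argument: set $w=\sum_n a_nv_n$, expand $\|w\|^2$ as a sum of inner products, and apply Cauchy--Schwarz together with the Bessel bound; you are in fact more careful than the paper about the passage from finite to infinite sums via the Cauchy criterion, which the paper glosses over. The converse is where you diverge: the paper argues operator-theoretically, observing that \eqref{e3- Bessel-sequence} makes the synthesis operator $T:\ell^2\to H$ bounded with $\|T\|\le\sqrt B$ and then invoking $\|T^*\|=\|T\|$ for the analysis operator $T^*(v)=\{\l v,v_n\r\}_n$ --- exactly the alternative you sketch and set aside. Your elementary version instead tests \eqref{e3- Bessel-sequence} against the finite truncations of $a_n=\l w,v_n\r$ and divides; this buys a self-contained proof that avoids identifying the adjoint, at the cost of the (harmless) caveat that one must handle the case where $\sum_{n\in F}|\l w,v_n\r|^2=0$ before dividing, and of fixing an inner-product convention so that $|\l w,v_n\r|^2=\l w,\,\l w,v_n\r v_n\r$ holds as written. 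Both routes are standard and equally valid; the paper's is shorter once one accepts the duality $\|T\|=\|T^*\|$, while yours makes the mechanism visible at the level of finite sums.
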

 \begin{proof}
% We can assume  $\sum_n|a_n|^2=1$   without loss of generality.	
Assume that  ${\cal V}$ is   a Bessel sequence; letting  $v= \sum_n a_nv_n$, we can write   \begin{align*}
    \|\sum_n a_nv_n\|^2 &= \l  \sum_n a_nv_n,\ v\r = \sum_na_n\l v_n,\, v\r\\
    &\leq  ( \sum_n|a_n|^2)^{\frac 12} (\sum_n |\l v,\, v_n\r|^2)^{\frac 12}\leq \sqrt B\, \|v
 	\| 
\end{align*}
 	from which \eqref{e3- Bessel-sequence}  follows. 
 	
 	Conversely, if \eqref{e3- Bessel-sequence}  holds, the   operator 
 	$T:\ell^2  \to H$, 
 	$$T(\{a_n\}_n)=\sum_n a_nv_n$$ is bounded and its dual is
 	$ T^*: H\to \ell^2 $,
 	$ T^*(v)=\{\l v, v_n\r\}_n.$ 
 	 By general linear algebra results, 
 	$ 
 	\|T^*(v)\|_{\ell^2}= (\sum_n |\l v, v_n\r|^2)^{\frac 12}\leq \sqrt B\|v\|,
 	$ 
 which proves that  ${\cal V}$ is a Bessel sequence.
 \end{proof}
  	
 %	Let  $\E=\{e^{2\pi i \lambda_n\cdot x}\}_{n\in\Z}$  be an exponential basis on a 
  Let $D\subset  \R^d$.  If $D$ is partitioned into disjoint sets $D_1$, ...,\, $D_m $  which are then translated with translations $\tau_1,\, ..., \tau_m  $ in such a way that the  $D_j+\tau_j$ do not intersect, and let  $\tilde D= (D_1+\tau_1)\cup...\cup (D_m+\tau_m)$. The following Lemma shows  hot wo obtain a basis of  $L^2(\tilde D)$  a basis of    for $L^2(D)$.   Its  proof is  e.g. in \cite{AD}.
 	
 	\begin{Lemma}	\label{l-Transform}
 		 Let $ D=\bigcup_{j=0}^{m} D_j$, where  $|D_j|>0$ for all $j\leq m$ and $k$  $D_k\cap D_j=\emptyset$ when $k\ne j$.  
 		
 		Let $\tau_1.\, ...,\, \tau_m$ be translations  such that 
 		$ ( D_j+\tau_j)\cap   (D_k +\tau_k)  =\emptyset$  when $k\ne j$. Let $\tilde D = \bigcup_{j=1}^m  ( D_j+\tau_j)  $. If 
 		$\B=\{\psi_n(x)\}_{n\in\N} \subset L^2(D)$ is a Riesz basis for $L^2(D)$, then 
 		$$\tilde \B =\Big\{\sum_{j=1}^m \chi_{ \tau_j(D_j)}\psi_n(\tau_j^{-1}x)\Big\}_{n\in\N} $$
 		is a Riesz basis for $L^2(\tilde D)$ with the same frame  bounds. 
        \end{Lemma}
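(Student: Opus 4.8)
The plan is to exploit the disjointness hypotheses, which allow us to decompose both $L^2(D)$ and $L^2(\tilde D)$ as orthogonal direct sums over the pieces. First I would record the key isometry. For each $j$, the translation $\tau_j$ induces a unitary map $U_j : L^2(D_j) \to L^2(\tau_j(D_j))$ given by $(U_j f)(x) = f(\tau_j^{-1} x) = f(x - \tau_j)$; this is an isometry because Lebesgue measure is translation invariant. Since the $D_j$ are pairwise disjoint and cover $D$, every $f \in L^2(D)$ decomposes uniquely as $f = \sum_{j=1}^m \chi_{D_j} f$ with $\|f\|^2 = \sum_j \|\chi_{D_j} f\|^2$, and similarly, because the $\tau_j(D_j)$ are pairwise disjoint and cover $\tilde D$, every $g \in L^2(\tilde D)$ decomposes as $g = \sum_j \chi_{\tau_j(D_j)} g$ with $\|g\|^2 = \sum_j \|\chi_{\tau_j(D_j)} g\|^2$. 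Assembling the $U_j$ yields a unitary operator $U : L^2(D) \to L^2(\tilde D)$, $U f = \sum_{j=1}^m \chi_{\tau_j(D_j)} (f \circ \tau_j^{-1})$, whose inverse is $U^{-1} g = \sum_j \chi_{D_j}(g \circ \tau_j)$.

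Next I would observe that, by construction, $\tilde\psi_n := U \psi_n$ for each $n$, so $\tilde\B = U(\B)$. The conclusion then follows from the general and elementary fact that a unitary operator carries a Riesz basis onto a Riesz basis with identical frame bounds: if $\B$ satisfies $A \sum |a_n|^2 \le \|\sum a_n \psi_n\|^2 \le B \sum |a_n|^2$ for all finite scalar sequences, then applying $U$ and using $\|U(\sum a_n \psi_n)\| = \|\sum a_n \psi_n\|$ (and linearity $U(\sum a_n \psi_n) = \sum a_n \tilde\psi_n$) gives the same two-sided inequality for $\tilde\B$; completeness is preserved because $U$ is onto and bounded with bounded inverse, so $\overline{\operatorname{span}}\,\tilde\B = U(\overline{\operatorname{span}}\,\B) = U(L^2(D)) = L^2(\tilde D)$.

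The only genuine verification is that $U$ is well defined and unitary, i.e. that the formula really does define an isometry from $L^2(D)$ onto $L^2(\tilde D)$; this is where the two disjointness assumptions are used in an essential way (disjointness of the $D_j$ to split the domain, disjointness of the $\tau_j(D_j)$ to split the range without cross terms), and it reduces to translation invariance of Lebesgue measure applied piece by piece together with the Pythagorean identity for orthogonal sums. I do not anticipate any real obstacle here — the statement is essentially a bookkeeping consequence of the fact that unitary equivalence preserves all Riesz-basis data. For completeness one may also remark that nothing is lost by indexing $D = \bigcup_{j=1}^m D_j$ (the stray $D_0$ and the typo ``$k$ $D_k \cap D_j = \emptyset$'' in the statement being harmless), and that the same argument in fact shows $\tilde\B$ is an orthonormal basis whenever $\B$ is.
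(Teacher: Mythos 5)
Your proof is correct: the operator $U f = \sum_{j=1}^m \chi_{\tau_j(D_j)}\,(f\circ\tau_j^{-1})$ is indeed unitary from $L^2(D)$ onto $L^2(\tilde D)$ by the two disjointness hypotheses and translation invariance of Lebesgue measure, and unitary equivalence transports a Riesz basis together with its exact frame bounds. The paper itself omits the argument and refers to \cite{AD}, but the unitary-transport proof you give is precisely the standard one intended there, so there is nothing to add.
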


 	The proof of the following easy lemma is omitted.
 	\begin{Lemma}\label{L-dil-basis}
 		Let $  v\in\R^d$ and $\rho>0$. 
 		The set ${\cal E}=\{ e^{2\pi i    \lambda_n \cdot x }\}_{n\in\Z}$ is a Riesz basis  (or a Riesz sequence, or a frame) for $L^2(D)$ with frame  bounds  $A$ and $B$ if and only if the set  $\{ e^{2\pi i   \frac {\lambda_n\cdot x}\rho   }\} $ is a Riesz basis (or a Riesz sequence, or a frame) for  $L^2( d_\rho D  +  v)$ with frame  bounds  $A \rho^{ d}$ and $B \rho^{ d}$.
 	\end{Lemma}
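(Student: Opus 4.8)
\emph{Proof proposal.} Write $\varepsilon_\mu(x)=e^{2\pi i\,\mu\cdot x}$, so that ${\cal E}=\{\varepsilon_{\lambda_n}\}_{n}$ and the second system is $\{\varepsilon_{\lambda_n/\rho}\}_{n}$. The plan is to transport one system onto the other by an explicit unitary operator and then use that a unitary preserves the frame / Riesz-sequence / Riesz-basis property together with the frame bounds, that multiplying every vector of a system by a fixed scalar $\mu$ rescales both bounds by $|\mu|^{2}$, and that multiplying individual vectors by unimodular constants changes nothing.

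Concretely, I would introduce $U\colon L^2(D)\to L^2(d_\rho D+v)$, $(Uf)(y)=\rho^{-d/2}f\big(\rho^{-1}(y-v)\big)$. The change of variables $y=\rho x+v$, $dy=\rho^{d}\,dx$, gives $\|Uf\|_{L^2(d_\rho D+v)}=\|f\|_{L^2(D)}$, so $U$ is a surjective isometry, hence unitary, and it carries complete systems to complete systems. A direct computation yields $U\varepsilon_{\lambda_n}=\rho^{-d/2}\,c_n\,\varepsilon_{\lambda_n/\rho}$, where $c_n=e^{-2\pi i(\lambda_n\cdot v)/\rho}$ and $|c_n|=1$; equivalently $\varepsilon_{\lambda_n/\rho}=\rho^{d/2}\,\overline{c_n}\,U\varepsilon_{\lambda_n}$.

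Next I would treat the three cases together. Suppose ${\cal E}=\{\varepsilon_{\lambda_n}\}$ is a frame, respectively a Riesz sequence, for $L^2(D)$ with bounds $A,B$. Since $U$ is unitary, $\{U\varepsilon_{\lambda_n}\}$ has the same property for $L^2(d_\rho D+v)$ with the same bounds $A,B$. In the frame case, $|\langle w,\varepsilon_{\lambda_n/\rho}\rangle|^{2}=\rho^{d}\,|\langle w,U\varepsilon_{\lambda_n}\rangle|^{2}$ for every $w\in L^2(d_\rho D+v)$, so summing over $n$ multiplies the frame inequality by $\rho^{d}$ and shows that $\{\varepsilon_{\lambda_n/\rho}\}$ is a frame with bounds $A\rho^{d},B\rho^{d}$. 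In the Riesz-sequence case, for a finite scalar sequence $\{a_n\}$ one has $\sum_n a_n\varepsilon_{\lambda_n/\rho}=\rho^{d/2}\sum_n (a_n\overline{c_n})\,U\varepsilon_{\lambda_n}$, hence $\|\sum_n a_n\varepsilon_{\lambda_n/\rho}\|^{2}=\rho^{d}\,\|\sum_n (a_n\overline{c_n})\,U\varepsilon_{\lambda_n}\|^{2}$; and since $\sum_n|a_n\overline{c_n}|^{2}=\sum_n|a_n|^{2}$, the two Riesz-sequence inequalities for $\{U\varepsilon_{\lambda_n}\}$ transfer to $\{\varepsilon_{\lambda_n/\rho}\}$ after inserting the factor $\rho^{d}$. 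Combining the two cases gives the Riesz-basis statement, and completeness passes over because $U$ is onto.

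Finally, the converse implication follows by symmetry, since $D$ is recovered from $d_\rho D+v$ by the same construction with $\rho$ replaced by $\rho^{-1}$ and $v$ by $-\rho^{-1}v$ (equivalently, by repeating the argument with $U^{-1}$ in place of $U$), which turns bounds $A\rho^{d},B\rho^{d}$ back into $A,B$. I do not expect a genuine obstacle: the only delicate point is the bookkeeping of the constants — keeping the normalization $\rho^{-d/2}$ built into $U$ separate from the factor $\rho^{d}$ that surfaces in the bounds, and checking that the unimodular factors $c_n$ drop out of every inequality.
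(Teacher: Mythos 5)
Your proof is correct: the unitary $U f(y)=\rho^{-d/2}f(\rho^{-1}(y-v))$ carries the one system onto the other up to the scalar $\rho^{-d/2}$ and unimodular factors, and the bookkeeping of the resulting factor $\rho^{d}$ in both the frame and Riesz-sequence inequalities is accurate. The paper explicitly omits the proof of this lemma as "easy," and your argument is precisely the standard change-of-variables reasoning the authors have in mind, so there is nothing to compare beyond noting agreement.
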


    \subsection{Limit of Riesz bases}
 
 Let $D$ and $\{D_j\}_{j=1}^\infty   $ be measurable subsets of $\R^d$.  We say that  the  sequence   $\{D_j\}_{j=1}^\infty$ is increasing (resp. decreasing) if $D_n\subset D_{n+1}$ (resp. $D_n\supset D_{n+1} $ ) for every  $n\ge 1$.
 
 We say that    $\{D_j\}_{j=1}^\infty$ converges to $D$, and we write $D_n\to D$, if  
 $
 \dsize \lim_{n\to\infty} |D\Delta D_n|\to 0.  
 $
 Here, $ U\Delta V= U\cup V- (U\cap V) $ denotes   the symmetric difference  of    $U$ and $V$. 
  We write $D_n\uparrow D$ (resp. $D_n\downarrow D$) if the sequence $\{D_j\}_{j=0}^\infty$ is increasing,   
  (resp. decreasing) and $D_n$ converges to $D$.

 We show that  a  Riesz basis on $L^2(D)$ can be obtained from an increasing sequence of    Riesz bases on sets that converge to $D$.
 %$\{D_j\}_{j=1}^\infty   \subset \R^n$ converges  to   $D$ if 
 %\begin{equation}\label{def-lim-set}
 % \lim_{n\to\infty} |D\Delta D_n|\to 0  
 %\end{equation}
 %where $A\Delta B=A\cup B- (A\cap B)$ denotes   the symmetric difference of  the sets  $A$ and $B$. 
 
 \begin{Lemma}\label{L-limit-bases}
 	Let  $\Omega, \,D \subset \R^d$ be  bounded and measurable, with $\Omega \supset D$. Let    $\{D_j\}_{j=1}^\infty $   be  a  family of measurable subsets of $\Omega$, with   $D_n\cap D\uparrow D$ and 
 	$D_n\cup D\downarrow D$.

 	Let   $\B_n \subset L^2(\Omega)\cap L^\infty(\Omega) $ be a Riesz basis of $ L^2(D_n  ) $ with frame  bounds   $A_n$  and $B_n$, and  let $\B=\cup_{n\in\N} \B_n$. Assume   $\ \B_n\subset\B_{n+1}$  for every $n\in\N$ and that 
 	$$ 0<  \alpha\leq A_n\leq B_n\leq \beta <\infty, $$ where   $\alpha,\beta> 0$ are independent of $n$.
 	Assume also that  the $L^\infty$ norm of the functions in  $\B $  is  bounded above by a constant independent of $n$. 
 	Then, $\B $ is a Riesz basis of $L^2(D)$ with frame  bounds  
 	$  \alpha $ and $   \beta.$ 
 \end{Lemma}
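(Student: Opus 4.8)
The plan is to verify separately that $\B$ satisfies the two-sided inequality \eqref{e2- Riesz-sequence} on $L^2(D)$ with the constants $\alpha,\beta$ and that $\B$ is complete in $L^2(D)$; a complete sequence satisfying \eqref{e2- Riesz-sequence} is a Riesz basis, and a Riesz basis is in particular a frame with the same bounds, so this yields the statement. I will use repeatedly that the hypotheses $D_n\cap D\uparrow D$ and $D_n\cup D\downarrow D$ force the decreasing families $D\setminus D_n=D\setminus(D_n\cap D)$ and $D_n\setminus D=(D_n\cup D)\setminus D$ to shrink to a null set, so that for every fixed $u\in L^2(\Omega)$ one has $\int_{D\setminus D_n}|u|^2\to 0$, $\int_{D_n\setminus D}|u|^2\to 0$ and $\int_{D\cap D_n}|u|^2\to\int_D|u|^2$. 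For \eqref{e2- Riesz-sequence}, fix a finite $F\subset\B$ and scalars $\{a_\psi\}_{\psi\in F}$; since the $\B_n$ are nested with union $\B$, choose $N$ with $F\subset\B_n$ for all $n\ge N$ and put $f=\sum_{\psi\in F}a_\psi\psi\in L^2(\Omega)$. Writing $\int_D|f|^2=\int_{D\cap D_n}|f|^2+\int_{D\setminus D_n}|f|^2$ and $\int_{D_n}|f|^2=\int_{D\cap D_n}|f|^2+\int_{D_n\setminus D}|f|^2$, and using that $\B_n$ is a Riesz basis of $L^2(D_n)$ with constants $A_n,B_n\in[\alpha,\beta]$, one obtains for every $n\ge N$
\[\alpha\sum_{\psi\in F}|a_\psi|^2-\int_{D_n\setminus D}|f|^2\ \le\ \|f\|_{L^2(D)}^2\ \le\ \beta\sum_{\psi\in F}|a_\psi|^2+\int_{D\setminus D_n}|f|^2;\]
letting $n\to\infty$ gives $\alpha\sum_{\psi\in F}|a_\psi|^2\le\|f\|_{L^2(D)}^2\le\beta\sum_{\psi\in F}|a_\psi|^2$.

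For completeness it suffices to show that any $g\in L^2(D)$ with $\langle g,\psi\rangle_{L^2(D)}=0$ for all $\psi\in\B$ vanishes. The crucial ingredient is a uniform Bessel bound: for each $n$, the restricted family $\{\psi|_{D\setminus D_n}:\psi\in\B_n\}$ is a Bessel system in $L^2(D\setminus D_n)$ with bound $\beta$. To prove this, fix $n$ and a finitely supported sequence $\{a_\psi\}_{\psi\in\B_n}$. For every $m\ge n$, regarding the sequence as supported in $\B_m\supset\B_n$ and applying the upper half of \eqref{e2- Riesz-sequence} for the Riesz basis $\B_m$ of $L^2(D_m)$, together with $(D\setminus D_n)\cap D_m\subseteq D_m$, yields
\[\left\|\sum_{\psi}a_\psi\psi\right\|_{L^2((D\setminus D_n)\cap D_m)}^2\ \le\ \left\|\sum_{\psi}a_\psi\psi\right\|_{L^2(D_m)}^2\ \le\ B_m\sum_{\psi}|a_\psi|^2\ \le\ \beta\sum_{\psi}|a_\psi|^2 .\]
Since $(D\setminus D_n)\cap D_m\uparrow D\setminus D_n$ as $m\to\infty$, monotone convergence gives $\big\|\sum_\psi a_\psi\psi\big\|_{L^2(D\setminus D_n)}^2\le\beta\sum_\psi|a_\psi|^2$, and Lemma~\ref{L-Bessel-seq} upgrades this to the asserted Bessel bound. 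Granting it, for $\psi\in\B_n$ the hypothesis on $g$ gives $\langle g\chi_{D\cap D_n},\psi\rangle_{L^2(D_n)}=\int_{D\cap D_n}g\bar\psi=-\int_{D\setminus D_n}g\bar\psi$, so applying the lower frame bound of $\B_n$ in $L^2(D_n)$ to $g\chi_{D\cap D_n}$ and then the Bessel bound on $D\setminus D_n$,
\[\alpha\int_{D\cap D_n}|g|^2\ \le\ \sum_{\psi\in\B_n}\left|\langle g\chi_{D\cap D_n},\psi\rangle_{L^2(D_n)}\right|^2\ =\ \sum_{\psi\in\B_n}\left|\int_{D\setminus D_n}g\bar\psi\right|^2\ \le\ \beta\int_{D\setminus D_n}|g|^2 .\]
Letting $n\to\infty$, the left side tends to $\alpha\|g\|_{L^2(D)}^2$ and the right side to $0$, so $g=0$; hence $\B$ is complete in $L^2(D)$, and combined with the first part it is a Riesz basis of $L^2(D)$ with frame bounds $\alpha$ and $\beta$.

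The only genuinely delicate step is the uniform Bessel estimate on the shrinking sets $D\setminus D_n$. A Riesz basis of $L^2(D_n)$, viewed as a family of functions on $\Omega$, need not be a Bessel system on $\Omega\setminus D_n$, so no crude pointwise bound is available; it is precisely the nesting $\B_n\subset\B_{n+1}$ — together with $D_n\cap D\uparrow D$ — that lets one transport the uniform upper bound $\beta$ from the genuine Riesz bases $\B_m$, $m\ge n$, onto the restrictions of $\B_n$ over $D\setminus D_n$, via monotone convergence in the synthesis inequality and Lemma~\ref{L-Bessel-seq}. Everything else reduces to bookkeeping with $|D\setminus D_n|\to 0$ and $|D_n\setminus D|\to 0$; in particular, the uniform $L^\infty$ bound on $\B$ does not appear to be needed for this line of argument.
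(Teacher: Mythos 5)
Your proof is correct, but it follows a genuinely different route from the paper's. The paper verifies the characterization ``frame $+$ Riesz sequence'': it first proves the frame inequality \eqref{e2-frame} on $L^2(D)$ by reducing to compactly supported continuous $f$ and comparing $\langle f,w_m\rangle_{L^2(D)}$ with $\langle f,w_m\rangle_{L^2(D\cap D_N)}$, and then proves the lower Riesz-sequence bound by writing $\|\sum a_m w_m\|^2_{L^2(D_n\cup D)}=\|\cdot\|^2_{L^2(D)}+\|\cdot\|^2_{L^2(W_n)}$ and killing the $W_n$-term with the uniform $L^\infty$ bound and $|W_n|\to 0$; completeness then comes for free from the frame property. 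You instead verify the definition ``complete $+$ two-sided synthesis inequality'' directly: your Riesz-sequence step is essentially the paper's decomposition, but since the estimate is needed only for one fixed finite combination at a time, dominated convergence on the shrinking sets $D\setminus D_n$ and $D_n\setminus D$ suffices and, as you correctly observe, the uniform $L^\infty$ hypothesis becomes unnecessary. The price is that completeness must be proved separately, and your argument for it --- the uniform Bessel bound $\beta$ for the restrictions of $\B_n$ to $D\setminus D_n$, obtained by transporting the upper bound from the larger bases $\B_m$, $m\ge n$, via monotone convergence and Lemma~\ref{L-Bessel-seq}, followed by the identity $\int_{D\cap D_n}g\bar\psi=-\int_{D\setminus D_n}g\bar\psi$ and the lower frame bound of $\B_n$ --- is new relative to the paper and is the most delicate part of your write-up; I checked it and it is sound (the monotonicity $(D\setminus D_n)\cap D_m\uparrow D\setminus D_n$ does follow from $D\cap D_m\uparrow D$). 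On balance, the paper's route is shorter once one accepts the frame/Riesz-sequence characterization, while yours is more self-contained relative to the definition stated in the introduction, avoids the reduction to $C_0(D)$, and shows the $L^\infty$ assumption is dispensable for this lemma.
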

 
 \noindent
 Lemma \ref{L-limit-bases}  bears some resemblance with Lemma 1 in \cite{KN2}.  
 \begin{proof}
 	We let  $\B=\{w_n\}_{n\in\Z}$.  %and, without loss of generality,  we assume that  $\|v\|_\infty \leq 1$. 
 	Let  $  Y_n= D-  (D\cap D_n) $ and $W_n=(D_n\cup D)-D$.
 	By assumption, $\dsize\lim_{n\to\infty}  | Y_n|=\lim_{n\to\infty}  | W_n|=0$.
 	
 	We show  that $\B$ is a frame in $L^2(D)$ by proving that   
 	\begin{equation}\label{3}  \alpha \| f\|_{L^2(D)}^2 \leq   \sum_{n\in\Z}  |\l f, \, w_n\r_{L^2(D)  }|^2\leq \beta  \| f\|_{L^2(D)}^2  
 	\end{equation}  for every   $f\in L^2(D)$.
 	By Lemma 5.1.7 in \cite{Chr2016} it is enough to prove \eqref{3} for continuous functions with compact support in $D$. 

 	Let $f\in C_0(D)$;  by assumption, $D_n\cap D\uparrow D$ %and  the $\overline{D\cap D_n}$ are compact; 
 	 and so  $\supp f \subset D\cap D_{\bar N}$ for some $\bar N>0$.  Thus,  $\supp f\subset  D\cap D_N $ for every $N >\bar N$, and   so 
 	$$ 
 	\sum_{m\in\Z}  |\l f, w_m\r_{L^2(D  )  }|^2 = \sum_{m\in\Z}  |\l f, w_m\r _{L^2(D\cap D_N  )}|^2.   $$
 	Since $\B_N$ is a  Riesz basis on $D_N$, it is a frame on $D\cap D_N$ with the same frame bounds.  By assumption,   $\B \supset  \B_N$,  and so for every $N>\ov N$, 
 	
 	\begin{align*}
 	&\sum_{m\in\Z}  |\l f, w_m\r_{L^2(D  )  }|^2 \ge  \sum_{ w_m\in\B_{  N} }  |\l f, w_m\r_{L^2(D  )  }|^2 \\  &= \sum_ {w_m\in\B_{  N} }  |\l f, w_m\r_{L^2(D_{  N}  )  }|^2 \ge A_N\ge \alpha.
 	\end{align*}
 	For the other inequality, we observe that  for every $n\in \N$ we have that 
 	$$
 	\sum_{m\in\Z}  |\l f, w_m\r_{L^2(D \cap D_n )  }|^2 \leq B_n \leq \beta.
 	$$
 	This inequality is valid for every $n\in \N$ and so we also  have $
 	\sum_{m\in\Z}  |\l f, w_m\r_{L^2(D   )  }|^2 \leq \beta.
 	$
 	
 	%To prove the other inequality, we fix  $K>\bar N$  and we let $\Sigma_K=\sum_{|n|=0}^K |\l f, v_n\r_{L^2(D)}|  ^2.$
 	
 	%
 	%Let  $N>\bar N >0$ so that all $v_n$'s with $|n|\leq K$   belong to  $\B_{N}$.  By assumption,  $\supp (f)\subset    D_N$, 
 	%and so   
 	%	$$
 	%\Sigma_K  =    \sum_{n=0}^K|\l f, v_n\r _{L^2(D_N)}|^2 \leq B_ N<\beta
 	%	$$
 	%This inequality is valid for every $K$ and $N$ and so we also have $\sum_{m\in\Z}  |\l f, v_m\r_{L^2(D  )  }|^2\leq \beta$

 	\medskip
 	We now prove that   $\B$ is a Riesz sequence in $L^2(D)$. Lemma \ref{L-Bessel-seq} yields that 
 	$
 	\|\sum _{m=1}^{  M} a_m  w_m   \|_{L^2(  D )}^2 \leq \beta $
 	for set of every complex constants $\{a_n\}_{n}  $  with  $\sum _{n=1  }^M|a_n|^2=1$, so we only need to prove the lower bound inequality.

 	Let  $  N>0$  for which   $v_n \in  \B_{  N}$  whenever $n\leq N$.    Recalling that $W_n=D\cup D_n-D$, that  $|W_n|\downarrow 0$  and that $ \|w_n\|_{L^\infty(\Omega)}\leq c$, with    $c$ that does not depend on $n$,  we fix $\epsilon>0$ and  we chose $N$ sufficiently large so that  
 	$ M |W_n|c <\epsilon$ whenever $n>N$.   
 	%Let $S_M=\sum _{n=1}^{  M} a_n v_n$;
 	Since $\B_n$ is a Riesz basis  on $D_n$, it is a Riesz sequence on  $D_n\cup D $, and   
 	$$A_n \leq \|\sum _{m=1}^{  M} a_m  w_m   \|_{L^2(D_n \cup D )}^2 = \|\sum _{m=1}^{  M} a_m w_m   \|_{L^2( D )}^2  + \|\sum _{m=1}^{  M} a_m w_m   \|_{L^2(W_n)}^2.
 	$$
 	Using Cauchy-Schwartz inequality, we obtain
 	\begin{align*}
 		\|\sum _{m=1}^{  M} a_m w_m  \|_{L^2(W_n)}^2 &=\int_{W_n} |\sum _{m=1}^{M} a_mw_m|^2 dx \leq (\sum _{m=1}^{M} |a_n|^2 )
 		\sum _{m=1}^{M}\int_{W_n} | w_n|^2 dx \\
 		&=  |W_n|  \sum _{m=1}^{M}\| w_m\|_\infty^2\leq  M|W_n| c\leq  \epsilon. 
 	\end{align*}
 	We have proved that 
 	$ \dsize 
 	\|\sum _{m=1}^{  M} a_m w_m  \|_{L^2(D)}^2\ge  A_n-\epsilon \ge \alpha -\epsilon.
 	$ 
 	Since this inequality is valid for every $M>0$ and  $\epsilon>0$, we can conclude  that  $\B$ is a Riesz sequence on $L^2(D)$.

 \end{proof}  
 
 \subsection{Exponential bases on multi-rectangles}
  
Let   $Q_d  $ the  unit cube $[ -\frac 12,  \,\frac 12)^d$.  Let $p_1,\, ...,\, p_N\in \R^d $ such that 
$Q_d+  p_k\cap Q_d+  p_h=\emptyset$ when $h\ne k$.   We define the multi-rectangle 
$$ T(  p_1, \, ...,  p_N):=\dsize \bigcup_{j=1}^N (Q_d+  p_k).$$  
When $d=1$, we  use the notation $$\dsize  I(p_1, ...,\, p_N) :=\bigcup_{j=1}^N ([-\frac 12, \frac 12)+  p_k).$$
 
In \cite{D}  the second author of this paper gave an explicit expression for exponential bases and their frame  bounds on sets   $T(  p_1, \, ...,  p_N)$.  Result of the same flavor appeared also in \cite{Kol2015}  and in \cite[Thm. 4.4]{A}.
 \begin{Thm}\label{T1-N-interv}
 With the definition and the notation previously introduced, the set   
 	$$ \B= \B( \delta_1, \, ...,\,   \delta_N):=  \bigcup_{ j=1} ^N \{e^{ 2\pi i \l   n+  \delta_j,\,  x\r}\}_{  n\in\Z^d} $$
 	is a Riesz basis of  $L^2(T(p_1,\, ...,\, p_N))$  if and only if  the matrix
 	\begin{equation}\label{def-Gamma}{\bf \Gamma}=\{ e^{2\pi i \l  \delta_j\cdot  p_k\r}\}_{1\leq j,k\leq N} \end{equation} is nonsingular.  
 	%\end{equation}
 	%
 	The optimal frame  bounds of $\B$  are the maximum and minimum singular value of $\G$.
 	%, or: the maximum and minimum  
 	% value of the function $ x \to  \frac{||\G x||^2}{||x||^2} $,  with  $x\in \C^N-\{0\}$.
 \end{Thm}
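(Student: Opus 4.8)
The plan is to realise the synthesis operator of $\B$ as a composition of two unitary maps with a single fixed $N\times N$ matrix acting fiberwise, after which every claim in the theorem becomes an elementary statement about $\G$.

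First I would normalise the configuration. Translating everything by $-p_1$ changes neither whether $\B$ is a Riesz basis nor its bounds (Lemma~\ref{L-dil-basis} with $\rho=1$), and it replaces $\G$ by its product with the diagonal unitary matrix with entries $e^{-2\pi i\l\delta_j,p_1\r}$, hence alters neither its rank nor its singular values. So I may assume $p_1=0$, and --- as in all the applications, where $T$ is a union of unit cells of $\Z^d$ --- I work in the case where all $p_k\in\Z^d$. Then I set up two unitaries. Because the cubes $Q_d+p_k$ are disjoint and cover $T$, the map $V\colon L^2(T)\to L^2(Q_d)^{N}$ given by $Vf=\bigl(f(\cdot+p_1)|_{Q_d},\dots,f(\cdot+p_N)|_{Q_d}\bigr)$ is unitary; and the map $W\colon\ell^2(\{1,\dots,N\}\times\Z^d)\to L^2(Q_d)^{N}$ given by $(Wa)_j=e^{2\pi i\l\delta_j,\cdot\r}\sum_{n\in\Z^d}a_{j,n}e^{2\pi i\l n,\cdot\r}$ is unitary as well, being on each coordinate the inverse Fourier-series isomorphism $\ell^2(\Z^d)\to L^2(Q_d)$ followed by multiplication by the unimodular function $e^{2\pi i\l\delta_j,\cdot\r}$.

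The heart of the argument is the identity
\[
S \;=\; V^{-1}\,\bigl(\G^{T}\otimes I_{L^2(Q_d)}\bigr)\,W,
\]
where $S\colon\ell^2\to L^2(T)$, $Sa=\sum_{j,n}a_{j,n}e^{2\pi i\l n+\delta_j,\cdot\r}$, is the synthesis operator of $\B$, and $\G^{T}\otimes I$ denotes fiberwise multiplication by the matrix $\G^{T}$ on $L^2(Q_d)^{N}$. To verify it, I would evaluate $Sa$ on one cube $Q_d+p_k$: writing $x=y+p_k$ with $y\in Q_d$ and using $p_k\in\Z^d$ to get $\sum_n a_{j,n}e^{2\pi i\l n,\,y+p_k\r}=\sum_n a_{j,n}e^{2\pi i\l n,y\r}$, one finds $Sa(y+p_k)=\sum_j e^{2\pi i\l\delta_j,p_k\r}(Wa)_j(y)=\sum_j\G_{jk}(Wa)_j(y)=\bigl(\G^{T}Wa\bigr)_k(y)$, which is exactly the $k$-th component of $V(Sa)$; hence $VS=(\G^{T}\otimes I)W$. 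I expect this bookkeeping to be the only real obstacle: one must keep the two kinds of exponentials apart --- the $\Z^d$-periodic $e^{2\pi i\l n,\cdot\r}$ and the modulations $e^{2\pi i\l\delta_j,\cdot\r}$ --- and it is precisely the membership $p_k\in\Z^d$ that lets the periodic part be pulled back to $Q_d$ uniformly in $k$, which is what makes everything collapse onto the constant matrix $\G^{T}$.

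Everything then follows from the factorisation. Since $\G^{T}\otimes I$ is bounded, with $\|\G^{T}\otimes I\|=\|\G^{T}\|=\sigma_{\max}(\G)$, the operator $S$ is bounded; in particular $\B$ is a Bessel system for every choice of the $\delta_j$. If $\G$ is nonsingular then, $\G$ being square, $\G^{T}$ and hence $\G^{T}\otimes I$ is invertible, so $S$ is a bounded bijection of $\ell^2$ onto $L^2(T)$ with bounded inverse --- which is exactly the statement that $\B$ is a Riesz basis of $L^2(T)$ --- and its optimal frame bounds (equal here to its optimal Riesz bounds) are $\|S^{-1}\|^{-2}=\sigma_{\min}(\G)^2$ and $\|S\|^{2}=\sigma_{\max}(\G)^2$, i.e.\ the least and greatest eigenvalues of $\G^{*}\G$, the squares of the extreme singular values of $\G$. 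Conversely, if $\G$ is singular then $\G^{T}\otimes I$ has a nontrivial kernel and is not onto, so $S$ is neither injective nor surjective; hence the coefficients of an expansion in $\B$ are not unique and $\B$ is not even complete, so it cannot be a Riesz basis. This proves the equivalence and the formula for the optimal bounds.
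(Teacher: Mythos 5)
Your factorisation $S=V^{-1}\bigl(\G^{T}\otimes I\bigr)W$ is correct and yields every assertion of the theorem at once, including the identification of the optimal bounds with the extreme eigenvalues of $\G\G^{*}$ (which is what this paper, with its nonstandard convention, calls the singular values of $\G$). Note that the paper itself does not prove this theorem --- it is quoted from \cite{D} --- and the argument there is in substance the coordinate version of yours: one expands $\bigl\|\sum_{j,n} a_{j,n}e^{2\pi i\langle n+\delta_j,x\rangle}\bigr\|_{L^2(T)}^{2}$ cube by cube and applies Parseval on each copy of $Q_d$, which is exactly what your unitaries $V$ and $W$ do. So the route is essentially the standard one, only packaged more cleanly as an operator identity, and the verification $VS=(\G^{T}\otimes I)W$ is carried out correctly.

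The one point you should state as a hypothesis rather than slip in as a convenience is $p_k\in\Z^d$ (or at least $p_k-p_h\in\Z^d$, which your translation step reduces to the integer case). It is not optional: as your own computation shows, the phase $e^{2\pi i\langle n,p_k\rangle}$ must be independent of $n$ for the fibres to collapse onto the single matrix $\G^{T}$, and without this the conclusion genuinely fails. For instance, take $d=1$, $p_1=0$, $p_2=\tfrac32$, $\delta_1=0$, $\delta_2=\tfrac12$, so that $T=[-\tfrac12,\tfrac12)\cup[1,2)$ and $\B=\{e^{\pi imx}\}_{m\in\Z}$, while $\G=\begin{pmatrix}1&1\\1&-i\end{pmatrix}$ is nonsingular. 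Yet $f=\chi_{[3/2,2)}-\chi_{[-1/2,0)}$ is a nonzero element of $L^2(T)$ orthogonal to every element of $\B$ (substitute $x\mapsto x+2$ and use the $2$-periodicity of $e^{\pi imx}$), so $\B$ is not even complete. The theorem as printed, with arbitrary $p_1,\dots,p_N\in\R^d$, is therefore false; the correct hypothesis --- the one in force in \cite{D} and in every application made of the theorem in this paper --- is that the translates are integral, and under that hypothesis your proof is complete.
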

 We recall that the singular values of a square matrix $M$ are the maximum and minimum eigenvalues of   $MM^*$.  %Estimating these eigenvalues remains a highly nontrivial task. 
 See also \cite{DK} for a generalized version of this theorem.
 %
 
 %
 %The matrix ${\bf \Gamma}$   in Theorem \ref{T1-N-interv} is associated to the multi-rectangle $R
 %= \bigcup_{  M_p\in {\cal I}}     t_{  M_p}(Q_0)
 %$ and to  the set  of exponentials  $\B$. We will also denote   $ \G $ with $\G(R, \B)$ or 
 %with $ \G( \l  M_p,\,  \delta_j\r) $. 
 
 %\medskip
% \ld{To be removed?}The following observation is important in the proof of Theorem \ref{T1-N-interv} and   in other results proved in  this paper:   Given an integer $N>1$,   every  $m\in\Z$  can be written  as  $m= m' N +j $, with  $0\leq j <N $.   Thus,  for every given integers $N_1, \, ...,\, N_d>0$, 
% the exponential set $E(\Z^d)=:\{e^{2\pi i (n_1x_1+ ...+n_dx_d)}\}_{n_1, \, ... n_d\in\Z}$  can be written as
% \begin{equation}\label{rem}
 %	E(\Z^d)= \bigcup_ {\genfrac{}{}{0 pt}{2}{0\leq j_k<N_k}{k=1, \, ...,\, d}}\{e^{2\pi i ((N_1 m_1 +  j_1 )x_1+ ...+ (N_d m_d +  j_d )  x_d)}\}_{m_1, \, ... m_d\in\Z}.\end{equation}

 \subsection{Exponential bases on multi-rectangles  from exponential bases on multi-intervals}
 \medskip
 %A {\it multi-rectangle} in $\R^2$ is a union of disjoint rectangles $[a ,\, b)\times [c,\, d)$   with $a <b$ and  $c<d$.
 %
% We  show how a 
%Bases on multi-rectangles can be obtained from bases on multi-intervals.

Let  $ T$ be the multi-rectangles
\begin{equation}\label{def-T}T= \bigcup_{k=1}^M [\alpha_k, \, \beta_k)\times [v_k,\, v_k+1),  
\end{equation}
where   $M\in  \N$,   and for every $k\leq M$ we have that $v_k\in\N\cup\{0\}$,  $\alpha_k,\, \beta_k\in\R$, $\alpha_k<\beta_k $ and $v_{k+1}\ge v_k +1$.
 These conditions ensure that the  rectangles $R_k= [\alpha_k, \, \beta_k)\times [v_k,\, v_k+1)$ do not intersect.
 
  Without loss of generality we can assume that $T\subset [0, N]\times [0,  v_M+1]$, where $N\in\N$.   
 We  show how an exponential basis on $L^2(T)$ can be obtained from an exponential basis on a union of segments of the real line. 
  
 \begin{Thm}\label{T-Segm-rect}
 Let $T$ be defined as in \eqref{def-T}; let  $$I= \bigcup_{k=1}^M [\alpha_k + (k-1) N, \  \beta_k +(k-1) N)
 $$ and let 	
 $\B =
  \{e^{2\pi i  \lambda_n x   } \}_{ n\in\Z} $ be an exponential basis on $L^2(I)$ with frame  bounds $A $ and $ B $.  
  Let $\{\omega_n\}_{n\in\Z} \subset \R$ such that
  \begin{equation}\label{e-cond k} N k\lambda_n- \omega_n v_k\in\Z  \quad \mbox{for every $k\leq M$,    $n\in\Z$}.
  	\end{equation}
  Then the set  \begin{equation}\label{e-tB}\t\B =
 	\{e^{2\pi i (\lambda_n x+ (m+\omega_n) y  )} \}_{m, n\in\Z}\end{equation}
 	is an exponential basis on $L^2(T)$ with   frame constant $A$ and $B$.
 	
 	 Conversely, if $ \t\B =
 	 \{e^{2\pi i (\lambda_n x+  (m+\sigma_{ n}) y  )} \}_{m, n\in\Z} $ 
 	 is an exponential basis on $L^2(T)$   with frame  bounds $A $ and $B$ and \eqref{e-cond k} hold, then the set $\B = \{e^{2\pi i \lambda_n x}\}_{n\in\Z}
 	  $  is an exponential basis on $L^2(I)$ with  frame bounds  $A$ and $B$.
 	\end{Thm}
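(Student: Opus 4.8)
\emph{Proof strategy.} The two assertions are the two implications of a single equivalence, so the plan is to prove them together; I write $\omega_n$ for the phases throughout, the converse being the reverse implication with $\omega_n$ renamed $\sigma_n$ and \eqref{e-cond k} read accordingly. The key idea is to identify $L^2(T)$ with $L^2(I\times[0,1))$ via a measure-preserving rearrangement: translating $R_k=[\alpha_k,\beta_k)\times[v_k,v_k+1)$ by $\big((k-1)N,\,-v_k\big)$ carries it onto $R_k'=[\alpha_k+(k-1)N,\,\beta_k+(k-1)N)\times[0,1)$, and since $T\subset[0,N]\times[0,v_M+1]$ forces $R_k'\subset[(k-1)N,kN)\times[0,1)$, the sets $R_k'$ are pairwise disjoint with $\bigcup_k R_k'=I\times[0,1)$. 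This induces a unitary $U\colon L^2(T)\to L^2(I\times[0,1))$. The first step is to compute $U$ on $\phi_{m,n}(x,y)=e^{2\pi i(\lambda_n x+(m+\omega_n)y)}$: on $R_k'$ its image is $e^{2\pi i\lambda_n(\xi-(k-1)N)}e^{2\pi i(m+\omega_n)(\theta+v_k)}$, and here I would use $e^{2\pi i mv_k}=1$ together with \eqref{e-cond k} in the form $e^{2\pi i\omega_nv_k}=e^{2\pi iNk\lambda_n}$ to reduce this to $e^{2\pi i\lambda_nN}\,e^{2\pi i(\lambda_n\xi+(m+\omega_n)\theta)}$ --- a formula that no longer involves the piece index $k$ and hence defines a single function on $I\times[0,1)$. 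Since multiplying the members of a Riesz basis by unimodular constants changes neither the basis property nor the frame bounds, and $U$ is unitary, both implications reduce to the equivalence: $\h\B:=\{e^{2\pi i(\lambda_n\xi+(m+\omega_n)\theta)}\}_{m,n\in\Z}$ is a Riesz basis of $L^2(I\times[0,1))$ with bounds $A,B$ if and only if $\B=\{e^{2\pi i\lambda_n\xi}\}_{n\in\Z}$ is a Riesz basis of $L^2(I)$ with bounds $A,B$.

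For this equivalence the starting point is the identity, valid for every finitely supported scalar array $(a_{m,n})$,
\[
\Big\|\sum_{m,n}a_{m,n}e^{2\pi i(\lambda_n\xi+(m+\omega_n)\theta)}\Big\|_{L^2(I\times[0,1))}^2=\int_0^1\!\!\int_I\Big|\sum_n b_n(\theta)\,e^{2\pi i\lambda_n\xi}\Big|^2 d\xi\,d\theta,\qquad b_n(\theta):=\sum_m a_{m,n}e^{2\pi i(m+\omega_n)\theta},
\]
obtained by grouping the $m$-sum and applying Fubini. For each fixed $n$ the family $\{e^{2\pi i(m+\omega_n)\theta}\}_{m\in\Z}$ is an orthonormal basis of $L^2([0,1))$ --- the trigonometric basis modulated by the unimodular factor $e^{2\pi i\omega_n\theta}$ --- so $\int_0^1|b_n(\theta)|^2d\theta=\sum_m|a_{m,n}|^2$ and therefore $\int_0^1\sum_n|b_n(\theta)|^2d\theta=\sum_{m,n}|a_{m,n}|^2$. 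To obtain $\h\B$ from $\B$, I would feed the scalars $(b_n(\theta))_n$ into the Riesz inequalities for $\B$ at each fixed $\theta$ and integrate in $\theta$; combined with the two identities this gives $A\sum_{m,n}|a_{m,n}|^2\le\big\|\sum a_{m,n}e^{2\pi i(\lambda_n\xi+(m+\omega_n)\theta)}\big\|^2\le B\sum_{m,n}|a_{m,n}|^2$, so $\h\B$ is a Riesz sequence with bounds $A,B$; it is complete because $\{e^{2\pi i\lambda_n\xi}g(\theta): n\in\Z,\ g\in L^2([0,1))\}$ is total in $L^2(I\times[0,1))$, and a complete Riesz sequence is a Riesz basis whose Riesz bounds also serve as frame bounds.

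For the reverse implication I would first note that $\h\B$, being a Bessel sequence with bound $B$, has (by Lemma~\ref{L-Bessel-seq} and continuity of the synthesis operator, passing to the limit on finite truncations) its Riesz inequalities valid for every array in $\ell^2(\Z^2)$, not just finite ones. Given a finite family $(d_n)$, I would apply these extended inequalities to the (in general infinite) array $a_{m,n}:=d_n\,\overline{\int_0^1 e^{2\pi i(m+\omega_n)\theta}\,d\theta}$, that is, the Fourier coefficients of $d_n e^{-2\pi i\omega_n\theta}$ in the orthonormal basis $\{e^{2\pi i(m+\omega_n)\theta}\}_m$; this array lies in $\ell^2(\Z^2)$ with $\sum_{m,n}|a_{m,n}|^2=\sum_n|d_n|^2$ and is chosen precisely so that $b_n(\theta)\equiv d_n$. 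The right-hand side of the displayed identity then reduces to $\|\sum_n d_n e^{2\pi i\lambda_n\xi}\|_{L^2(I)}^2$, and the extended inequalities read $A\sum_n|d_n|^2\le\|\sum_n d_n e^{2\pi i\lambda_n\xi}\|_{L^2(I)}^2\le B\sum_n|d_n|^2$, so $\B$ is a Riesz sequence with bounds $A,B$. Completeness of $\B$ descends from that of $\h\B$: if $h\in L^2(I)$ is orthogonal to every $e^{2\pi i\lambda_n\xi}$, then $h(\xi)g(\theta)$ is orthogonal to all of $\h\B$ for each $g\in L^2([0,1))$, hence $h(\xi)g(\theta)=0$ and $h=0$.

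The crux --- and the only place the arithmetic hypothesis enters --- is the first step: checking that \eqref{e-cond k} is exactly what makes $U\phi_{m,n}$ shed its dependence on the piece index $k$ and become an honest exponential on $I\times[0,1)$; without it the transported system is not a modulated exponential system and the argument breaks down. A secondary technical point is that in the reverse implication the coefficient array that flattens $b_n(\theta)$ to a constant is genuinely infinite unless every $\omega_n$ is an integer, so one must step outside the class of finite sequences and invoke the Bessel bound, via Lemma~\ref{L-Bessel-seq}, to make the computation rigorous.
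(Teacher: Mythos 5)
Your proof is correct and follows essentially the same route as the paper: the piecewise-translation isometry between $L^2(T)$ and $L^2(I\times[0,1))$, the verification that condition \eqref{e-cond k} is exactly what removes the $k$-dependence of the transported phases, and the product-structure argument of Lemma \ref{L-cart-prod1}. You additionally work out the converse implication in detail (including the passage to infinite coefficient arrays justified by the Bessel bound), which the paper states but does not prove.
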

 
  The assumptions on  the multi-rectangle $T$  ensure that  $ I $
 is the union of $M$ disjoint  intervals.
 
   Theorem \ref{T-Segm-rect}  is proved in \cite{DK}, but we sketch the proofs here for the convenience of the reader. Versions of the Lemma below are in \cite{SZ}  and in \cite{DK}.

  \begin{Lemma}\label{L-cart-prod1}
 	Let $D$  be a domain of $\R^d$ and let  $\{\psi_n(x)\}_{n\in\Z}$ be a  Riesz basis of  $L^2(D)$ with frame  bounds $A$ and $B$.
 	Then, for every sequence $\{\omega_n\}_{n\in \N}\subset \R$, the set  
 ${\mathcal U}=\{\psi_n(x) e^{2\pi i(m+\omega_n)y}\}_{n,m \in\Z}$ is Riesz basis of $L^2(D\times [0,1])$ with frame  bounds $A$ and $B$.
 \end{Lemma}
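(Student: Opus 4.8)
The plan is to reduce the statement to the one–variable fact that, for each fixed $\omega\in\R$, the system $\{e^{2\pi i(m+\omega)y}\}_{m\in\Z}$ is an orthonormal basis of $L^2([0,1])$; this is immediate because $g(y)\mapsto e^{2\pi i\omega y}g(y)$ is a unitary operator on $L^2([0,1])$ carrying the standard Fourier basis $\{e^{2\pi imy}\}_{m\in\Z}$ onto it. Identifying $L^2(D\times[0,1])$ with $L^2([0,1];L^2(D))$ via Fubini's theorem, I would verify the Riesz–sequence inequalities for finite linear combinations by integrating first in $x$ (using the Riesz basis property of $\{\psi_n\}$) and then in $y$ (using Parseval's identity for the orthonormal basis above), and then check completeness separately; recalling that a complete Riesz sequence is a Riesz basis with the same bounds, this finishes the proof.

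For the Riesz–sequence estimate, given a finite family $\{a_{n,m}\}\subset\C$ I would write
$$\sum_{n,m}a_{n,m}\psi_n(x)e^{2\pi i(m+\omega_n)y}=\sum_n g_n(y)\,\psi_n(x),\qquad g_n(y):=e^{2\pi i\omega_n y}\sum_m a_{n,m}e^{2\pi imy},$$
compute the squared $L^2(D\times[0,1])$–norm as $\int_0^1\big\|\sum_n g_n(y)\psi_n\big\|_{L^2(D)}^2\,dy$ by Fubini, and for a.e.\ fixed $y$ apply \eqref{e2- Riesz-sequence} for $\{\psi_n\}$ to bound the integrand between $A\sum_n|g_n(y)|^2$ and $B\sum_n|g_n(y)|^2$. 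Since $|g_n(y)|=\big|\sum_m a_{n,m}e^{2\pi imy}\big|$, Parseval gives $\int_0^1\sum_n|g_n(y)|^2\,dy=\sum_{n,m}|a_{n,m}|^2$, and integrating the sandwich in $y$ yields $A\sum_{n,m}|a_{n,m}|^2\le\big\|\sum_{n,m}a_{n,m}\psi_n e^{2\pi i(m+\omega_n)y}\big\|^2\le B\sum_{n,m}|a_{n,m}|^2$, i.e.\ $\mathcal U$ is a Riesz sequence with bounds $A$ and $B$.

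To prove completeness I would take $F\in L^2(D\times[0,1])$ orthogonal to every element of $\mathcal U$ and set $c_n(y):=\l F(\cdot,y),\,\psi_n\r_{L^2(D)}$; since $|c_n(y)|\le\|\psi_n\|_{L^2(D)}\,\|F(\cdot,y)\|_{L^2(D)}$ and $y\mapsto\|F(\cdot,y)\|_{L^2(D)}^2$ is integrable, we have $c_n\in L^2([0,1])$, and Fubini turns $\l F,\psi_n e^{2\pi i(m+\omega_n)y}\r=0$ into the vanishing of every Fourier coefficient of $c_n(y)e^{-2\pi i\omega_n y}$; hence $c_n\equiv0$ for each $n$, so for a.e.\ $y$ the slice $F(\cdot,y)$ is orthogonal to the complete system $\{\psi_n\}$ in $L^2(D)$ and therefore $F=0$. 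The computations are routine: the only points needing care are the Fubini interchanges (each integrand is dominated in $L^1(D\times[0,1])$ by Cauchy–Schwarz, using $\|\psi_n\|_{L^2(D)}<\infty$) and the membership $c_n\in L^2([0,1])$, so that the classical "all Fourier coefficients zero" conclusion applies; I do not anticipate any genuine obstacle.
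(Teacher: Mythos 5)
Your proof is correct, and its computational core coincides with the paper's: after a Fubini interchange, the system is analyzed slice by slice in $y$, using Parseval for $\{e^{2\pi i m y}\}_{m\in\Z}$ (the modulations $e^{2\pi i\omega_n y}$ being unimodular and hence harmless) together with the Riesz property of $\{\psi_n\}$ in the $x$-variable; your two-sided Riesz-sequence estimate via the functions $g_n(y)$ is exactly the paper's estimate via its $b_n(y)$. Where you genuinely diverge is in how the Riesz sequence is promoted to a basis. The paper works with the decomposition ``Riesz basis $=$ frame $+$ Riesz sequence'' and verifies the frame inequalities directly, computing $\sum_{n,m}|\l f,\,\psi_n e^{2\pi i(m+\omega_n)y}\r|^2=\sum_n\|g_n\|^2_{L^2(0,1)}$ and then applying the frame property of $\{\psi_n\}$ to the slices $f(\cdot,y)$. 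You instead prove completeness by an annihilation argument (all Fourier coefficients of $c_n(y)e^{-2\pi i\omega_n y}$ vanish, hence $c_n\equiv 0$, hence a.e.\ slice of $F$ is annihilated by the complete system $\{\psi_n\}$) and then invoke the standard fact that a complete Riesz sequence is a Riesz basis whose optimal frame bounds equal its Riesz-sequence bounds. Both routes are legitimate and of comparable length: the paper's yields the frame inequality explicitly as a byproduct, while yours avoids the dual (analysis-operator) computation entirely at the cost of citing the ``complete Riesz sequence $\Rightarrow$ Riesz basis'' theorem. The measure-theoretic points you flag --- quantifying ``a.e.\ $y$'' over countably many $n$, and checking $c_n\in L^2([0,1])$ so that the vanishing-Fourier-coefficients argument applies --- are indeed the only places requiring care, and you handle them adequately.
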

  \begin{proof}[Proof of Theorem \ref{T-Segm-rect}]  If $\B = \{e^{2\pi i \lambda_n x}\}_{n\in\Z}$ is an exponential basis of $L^2(I)$, by Lemma  \ref{L-cart-prod1}  the set  $ 
  	\t\B  
    $ in \eqref{e-tB} is an exponential basis of  $L^2(I\times [0,1))$  with the same frame  bounds.
  We show that $\t \B$ is an exponential basis of $L^2(T)$ as well. 
  %Let $R_k= [\alpha_k, \, \beta_k)\times [v_k,\, v_k+1)$
  %
 Consider  the linear application
  $ L : L^2( I\times [0,1)) \to L^2( T)$, 
  $$ Lf (x, y)=  \sum_{k=1}^M \chi_{R _k} (x) f(x+N(k-1), y-v_k), $$ where    $R_k=[\alpha_k, \, \beta_k)\times [v_k,\, v_k+1) $.
  
  It is not too difficult to verify that $L$ is an invertible isometry, and hence it maps a  Riesz basis of $L^2(I\times [0,1])$  into a  Riesz basis of $ L^2(T)$ with the same frame  bounds. The inverse of $L$ is  the map $ L^{-1} : L^2(T)\to  L^2( I\times [0,1))$  
  $$ L^{-1}g (x, y)=  \sum_{k=1}^M \chi_{I _k} (x) g(x-N(k-1), y+v_k), $$ where   $I_k=[\alpha_k + (k-1)N, \  \beta_k+(k-1)N)\times [0,1) $.
  We have 
  $$L(e^{2\pi i (\lambda_n x+ (m+\omega_n) y  ))})= 
  \sum_{k=1}^M \chi_{R _k}e^{2\pi i [(x+N(k-1))\lambda_n + (y-v_k) (m+\omega_n) ]}
  $$
  $$
  =\sum_{k=1}^M e^{2\pi i (N(k-1)\lambda_n-v_k(m+\omega_n)}\chi_{R _k} (x)e^{2\pi i [x\lambda_n + y(m+\omega_n) ]}
  $$
  and so  the set $L(\t \B)$ is an exponential basis of $L^2(T)$ if the  terms $e^{2\pi i (N(k-1)\lambda_n-v_k(m+\omega_n)}$ that appear in  in the sum do not  depend on $k$. For that we need
   $  Nk\lambda_n-v_k \omega_n  \in\Z $, as required. 
  
 % Conversely,  assume that 
 % ${\cal U}= \{e^{2\pi i (\lambda_n x+ \mu_n y  )} \}_{m, n\in\Z} $ is a Riesz basis on $L^2(T)$. The map
 % $ L^{-1} : L^2 (T)\to L^2( I\times [0,1))  $
  %$$ L^{-1}g (x, y)=  \sum_{k=1}^M \chi_{\alpha_k+(k-1)N,\ \beta_k+(k-1)N)} (x) f(x-N(k-1), y+v_k) $$ 
%  maps a basis of $L^2(T) $ into a basis of $ L^2( I\times [0,1))  $. 
  
 % We have
 % $$L(e^{2\pi i (\lambda_n x+ \mu_m y  ))})= 
 % \sum_{k=1}^M \chi_{I_k} (x) e^{2\pi i [(x-N(k-1))\lambda_n + (y+v_k) \mu_m ]}
  %$$
 % $$
 % =\sum_{k=1}^M e^{-2\pi i (N(k-1)\lambda_n-v_k\mu_m)}\chi_{I_k} (x) (x)e^{2\pi i [x\lambda_n + y\mu_m ]}.
  %$$ 
 % where  we have let $I_k= (\alpha_k+(k-1)N,\ \beta_k+(k-1)N)$.
  %
 % The set the set $L^{-1}({\cal U})$ is an exponential basis of $L^2(I)$
%  $Nk\lambda_n-v_k\mu_m \in\Z$
 
 \end{proof}
 \begin{proof}[Proof of Lemma \ref{L-cart-prod1}]
 	Let us show that $\U$ is a frame. Let $f(x,y)\in L^2(D\times [0,1])$.
 	Fix $n\in\N$;
 	Then, for every $m\in\Z$,
 	\begin{align*}
 		\l f, v_n(x) e^{ 2\pi i(m+\omega_n)y}\r_{L^2(D\times [0,1])}  &=\int_0^1 e^{2\pi i(m+\omega_n)y} \int_D f(x,y)\overline{v_n(x )}dx \\ &=:  \int_0^1 e^{2\pi imy} g_n(y)dy, 
 	\end{align*}
 	where we have let $g_n(y)=: e^{2\pi i \omega_n y} \int_D f(x,y)\overline{v_n(x )}\,dx$.
 	Thus,
 	$$
 	\sum_m|\l   f,\ v_n(x) e^{-2\pi i(m+\omega_n)y}\r_{L^2(D\times [0,1])}|^2= 
 	\sum_m |\l e^{-2\pi imy} g_n(y)\r|^2_{L^2(0,1)}= \|g_n\|_{L^2(0,1) }^2
 	$$
 	\begin{align*}
 	\mbox{and}\quad	&\sum_n\sum_m|\l   f,\ v_n(x)  e^{-2\pi i(m+\omega_n)y}\r_{L^2(D\times [0,1])}|^2  =
 		\sum_n\|g_n\|_{L^2(0,1)}^2\\ &=
 		\sum_n	\int_0^1\left|  \int_D f(x,y)\overline{v_n(x )}dx\right|^2dy\\ &=
 		\int_0^1\sum_n \left|  \l  f(x,\cdot ),\, v_n\r_{L^2(D)}\right|^2dy 
 		\\ &\leq B\int_0^1\left(\int_D|f(x,y)|^2 dx\right)dy= B\|f\|_{L^2(D\times [0,1])}^2.
 	\end{align*}
 	We can prove the lower bound inequality  in a similar manner.
 	
 	\medskip
 	Let us prove that $\B$ is a Riesz sequence.
 	Let $\{a_{n,m}\}$ be a finite set of constants such that $\sum_{n,m}|a_{n,m}|^2=1$. Then,
 	\begin{align*}
 		\|\sum_{n,m} a_{n,m}v_n(x)  e^{-2\pi i(m+\omega_n)y}  \|_{L^2(D\times [0,1])}^2 &=
 		\|\sum_n v_n(x)  \left(\sum_m a_{n,m}  e^{-2\pi i(m+\omega_n)y}\right)\|_{L^2(D\times [0,1])}^2 \\
 		&  =\int_0^1\left(\int_D\|\sum_n v_n(x)b_n(y)\|dx\right)dy,
 	\end{align*}
 	where we have let $b_n(y)=:\sum_m a_{n,m}  e^{-2\pi i(m+\omega_n)y}$.
 	We have
 	\begin{align*}
 		&\|\sum_{n,m} a_{n,m}v_n(x)  e^{-2\pi i(m+\omega_n)y}   \|_{L^2(D\times [0,1])}^2  \leq B\int_0^1\sum_n|b_n(y)|^2 dy
 		\\
 		&	= B \int_0^1\sum_n\left|\sum_m a_{n,m}  e^{-2\pi i(m+\omega_n)y}\right|dy
 		 =B\sum_n\int_0^1\left| \sum_m a_{n,m}  e^{-2\pi i m y}\right|dy
 		\\&=B\sum_n\sum_m|a_{n,m}|^2=B.
 	\end{align*}
 	 The proof of the lower bound inequality is similar. 
 \end{proof}

 \noindent
 {\it Remark}. It is interesting to  observe  that if $\{e^{2\pi i \lambda_n x}\}_{n\in\Z}$ is a Riesz sequence  in $L^2(D)$ with constants $A, B$ and $\{c_n\}_{n\in\Z}\subset \C$  is such that $0< \alpha< |c_n|^2<\beta $ for every $n\in\Z$, then the set $\{c_ne^{\lambda_n x}\}_{n\in\Z}$  is a Riesz sequence  with constants $\alpha A$ and $\beta B$.

    \subsection{  Bases on sub-domains }
 The following proposition  generalizes results by \cite{BCNS2019,MM2009}.
 \begin{Prop}\label{P-compl-Rs}
 	 Let $\Omega \subset \R^d$ and let   ${\cal N}$ be a  Riesz basis for $L^2(\Omega)$ with frame bounds $A,\, B$.  
 	%Let $\alpha , \beta>0$ such that $$0\leq B-A<\alpha<\beta <A.$$  
 	Let $D\subset \Omega$ and ${\cal C} \subset {\cal N}$ ;  
 	Let $D'=\Omega-D$  and ${\cal C}' ={\cal N }-{\cal C }$.
 	Then 1)$\Rightarrow$ 2) $\Rightarrow$3)$\Rightarrow$4)$\Rightarrow$5) $ \Rightarrow$6).
 	
 	\begin{enumerate}
 		\item ${\cal C}$ is a  frame   on $ L^2(D)  $ with bound $\beta <A$ and  $\alpha > 0$

 		\item ${\cal C'}$ is a frame    on $L^2(D) $ with frame bounds $A_1\ge A-\beta$ and $B_1\leq B-\alpha$. 
 		
 		\item  If $B_1<A$, then ${\cal C}'$ is a Riesz sequence    on $L^2(D') $ with frame bounds $A_2\ge A-B_1$   and $B_2\leq  B_1$.

 		\item ${\cal C}'$ is a  Bessel  sequence on $L^2(D')$ with bound $B_3\leq B_2$.
 		
 		\item  If $B_3< A$,  ${\cal C} $ is a frame  on $L^2(D')$ with bound $B_4\leq  B_3$ and $A_4\ge  A- B_3$.

 		\item ${\cal C} $ is a Riesz sequence on $L^2(D )$ with frame bounds $B_5\leq B_4$ and $A_5\ge  A-B_4 $.
 		
 	\end{enumerate}  
 	
 \end{Prop}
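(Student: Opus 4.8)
The plan is to deduce all six implications from one elementary bookkeeping identity and then to read off the constants. I would first record the identity: identifying $L^2(D)$ and $L^2(D')$ with the subspaces of $L^2(\Omega)$ of functions vanishing a.e.\ off $D$, resp.\ off $D'$, one has $\langle f,v\rangle_{L^2(\Omega)}=\langle f,v\chi_D\rangle_{L^2(D)}$ for $f\in L^2(D)$ and every $v\in{\cal N}$. Since ${\cal N}={\cal C}\sqcup{\cal C}'$ is a frame for $L^2(\Omega)$ with bounds $A,B$, for every $f\in L^2(D)$
$$
A\|f\|^2\ \le\ \sum_{v\in{\cal C}}|\langle f,v\rangle|^2+\sum_{v\in{\cal C}'}|\langle f,v\rangle|^2\ \le\ B\|f\|^2 ,
$$
and the same holds with $D$ replaced by $D'$. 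Dually, any subcollection ${\cal S}\subseteq{\cal N}$ is a Riesz sequence in $L^2(\Omega)$ with bounds $A,B$, so for every finitely supported $\{a_v\}_{v\in{\cal S}}\subset\C$
$$
A\sum|a_v|^2\ \le\ \Big\|\sum_{v\in{\cal S}}a_vv\Big\|_{L^2(D)}^2+\Big\|\sum_{v\in{\cal S}}a_vv\Big\|_{L^2(D')}^2\ \le\ B\sum|a_v|^2 .
$$
I would also use Lemma~\ref{L-Bessel-seq}: a Bessel bound $\widetilde B$ for a collection is precisely the estimate $\|\sum a_vv\|^2\le\widetilde B\sum|a_v|^2$.

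Granting this, each implication is obtained by subtracting a ``small'' term from a ``large'' one. For 1)$\,\Rightarrow\,$2) I would argue on the analysis side: for $f\in L^2(D)$, the quantity $\sum_{v\in{\cal C}'}|\langle f,v\rangle|^2=\big(\sum_{v\in{\cal N}}|\langle f,v\rangle|^2\big)-\sum_{v\in{\cal C}}|\langle f,v\rangle|^2$ lies between $(A-\beta)\|f\|^2$ and $(B-\alpha)\|f\|^2$ by the identity and hypothesis 1), the assumption $\beta<A$ being exactly what makes the new lower bound positive. Implication 4)$\,\Rightarrow\,$5) is the same computation on $L^2(D')$, subtracting this time the Bessel bound $B_3$ of ${\cal C}'$ and using $B_3<A$. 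For 2)$\,\Rightarrow\,$3) I would pass to the synthesis side: for finitely supported $\{a_v\}_{v\in{\cal C}'}$,
$$
\Big\|\sum_{v\in{\cal C}'}a_vv\Big\|_{L^2(D')}^2=\Big\|\sum_{v\in{\cal C}'}a_vv\Big\|_{L^2(\Omega)}^2-\Big\|\sum_{v\in{\cal C}'}a_vv\Big\|_{L^2(D)}^2,
$$
the first term on the right being $\ge A\sum|a_v|^2$ and the second $\le B_1\sum|a_v|^2$ by the upper frame bound of ${\cal C}'$ on $L^2(D)$ and Lemma~\ref{L-Bessel-seq}; hence the difference is $\ge (A-B_1)\sum|a_v|^2>0$, which is positive because $B_1<A$, giving the Riesz lower bound $A_2\ge A-B_1$. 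Implication 5)$\,\Rightarrow\,$6) is identical after exchanging the roles of ${\cal C},{\cal C}'$ and of $D,D'$. Finally 3)$\,\Rightarrow\,$4) is immediate: a Riesz sequence is a Bessel sequence with the same upper bound.

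The step I expect to be the real obstacle is the propagation of the \emph{upper} constants through the three domain-switching implications 2)$\,\Rightarrow\,$3), 4)$\,\Rightarrow\,$5), 5)$\,\Rightarrow\,$6). The crude ``discard a nonnegative term'' estimate only gives an upper bound $B$ at those steps — e.g.\ $\|\sum a_vv\|_{L^2(D')}^2\le\|\sum a_vv\|_{L^2(\Omega)}^2\le B\sum|a_v|^2$ — whereas the statement records the sharper $B_2\le B_1$, $B_4\le B_3$, $B_5\le B_4$, and it is these sharper bounds that guarantee the ``${}<A$'' hypothesis keeps holding as one descends the chain. Recovering them requires, in addition, a lower estimate for the discarded term, i.e.\ a positive lower frame or Riesz bound transferred onto the restricted system; the natural vehicle is to split the Gram operator of ${\cal C}'$ (equivalently, the frame operator of ${\cal N}$) as the sum of its contributions over $D$ and over $D'$, and the delicate point is to check that the resulting constant is controlled by the previous upper bound rather than merely by $B$. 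This is where the threshold hypotheses $\beta<A$, $B_1<A$, $B_3<A$ are used for more than mere positivity, and once this verification is in place every constant in the hierarchy drops out of a one-line computation.
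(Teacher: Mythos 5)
Everything you actually carry out coincides, step for step, with the paper's proof: 1)$\Rightarrow$2) and 4)$\Rightarrow$5) by subtracting on the analysis side for $f$ supported in $D$ (resp.\ $D'$), 2)$\Rightarrow$3) and 5)$\Rightarrow$6) by subtracting on the synthesis side after converting the upper frame/Bessel bound into a synthesis bound via Lemma~\ref{L-Bessel-seq}, and 3)$\Rightarrow$4) for free. All the lower bounds $A_j$ and the bound $B_1\le B-\alpha$ come out exactly as in the paper, so that portion of your proposal is correct and is the paper's argument.

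The obstacle you isolate at the end is, however, a genuine gap and not a deferrable technicality --- and you should know that the paper does not close it either: for 2)$\Rightarrow$3) it simply writes $\sup_{\{b_n\}}\|\sum_n b_nw_n\|^2_{L^2(D')}=B_2\le B_1$ with no justification, and for 4)$\Rightarrow$5) it ``applies 1)$\Rightarrow$2) with $\beta=B_3$'', which with $\alpha=0$ only yields the upper bound $B$, not $B_3$. More importantly, the lower estimate for the discarded term that you propose to look for does not exist in general: $B_2\le B_1$ fails for the optimal constants whenever ${\cal C}'$ is an \emph{overcomplete} frame for $L^2(D)$. In that case its synthesis operator into $L^2(D)$ has a nontrivial kernel, and for a unit-norm $\{b_n\}$ in that kernel
\[
\Big\|\sum_n b_nv_n\Big\|^2_{L^2(D')}=\Big\|\sum_n b_nv_n\Big\|^2_{L^2(\Omega)}\ \ge\ A\ >\ B_1 ,
\]
so $B_2\ge A$. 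Concretely, take $\Omega=[0,1]$, ${\cal N}=\{e^{2\pi inx}\}_{n\in\Z}$, $D=[0,\tfrac13]$ and ${\cal C}=\{e^{2\pi inx}:n\equiv 2\ (\mathrm{mod}\ 3)\}$: then $\alpha=\beta=\tfrac13<A=1$, ${\cal C}'$ is a tight overcomplete frame on $L^2(D)$ with $B_1=\tfrac23$, yet $B_2=1$. So the correct conclusion of your synthesis-side computation (and of the analysis-side one at step 5) is only $B_2,B_3,B_4,B_5\le B$; with that weakening, the threshold hypotheses $B_1<A$ and $B_3<A$ become genuine additional assumptions rather than consequences of the preceding steps, and the chain as stated must be amended accordingly. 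Your diagnosis of where the difficulty lies was exactly right; the resolution is that the claimed monotonicity of the upper constants should be dropped, not proved.
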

 
 \noindent
 {\it Remark.}  This  proposition is valid also when $\alpha=0$, i.e., when ${\cal C}$ in (1) is just a Bessel sequence.     The proof in  \cite{BCNS2019}   only considers the case  $\alpha=0$  and $A=B=1$.
 
 If $\alpha>B-A>0$, then we can estimate the various constants in terms of $\alpha$. Indeed, it is easy to verify that  $A_j\ge  \alpha- (B-A)$ for every $j\ge 2$.
 
 A sufficient condition for (1)---(5) to hold is that $B+\beta <2A$.

 \begin{proof}[Proof of Proposition \ref{P-compl-Rs}]  
 	{\bf  1)$\Rightarrow$ 2)}. Let ${\cal C}=\{w_n\}_{n\in\N}$ and ${\cal C}'=\{v_n\}_{n\in\N}$.  
 	%Recall that ${\cal C}=\{v_n\}_{n\in\Z}$ and ${\cal C}'= {\cal N}-{\cal C}=\{w_m\}_{m\in\Z}$.
 	%
 	If ${\cal C}$ is a frame   on $L^2(D)$ with  frame bound  $\alpha$ and $\beta>0$, for every $f\in L^2 (D)$  with compact support in $D$ we  have that
 	$$
 	\sum_m  |\l w_m,\, f\r _{L^2(  D)}|^2  =\sum_m  |\l w_m,\, f\r _{L^2(\Omega)}|^2\ge A\|f\|^2-\sum_n  |\l v_n,\, f\r _{L^2(\Omega)}|^2 $$$$ = A\|f\|^2-\sum_m  |\l v_n,\, f\r _{L^2(D)}|^2\ge (A-\beta)\|f
 	\|^2.
 	$$
 	We can prove that   $ \sum_m  |\l w_m\, f\r _{L^2(  D) }|^2\leq (B-\alpha)\|f\|^2  $ in a similar way, and so we have proved that ${\cal C}'$ is a frame on $D$.
 	
 	\medskip
 	\noindent {\bf  2)$\Rightarrow$ 3)} Assuming that ${\cal C'}$ is a frame  on $D$ with  frame bound  $A_1,\, B_1$,  we prove that  it is a Riesz sequence   on $D'$.
 	
 	Since ${\cal C'}$ is a Bessel sequence on $D$, by Lemma \ref{L-Bessel-seq}  for every sets of constants   $\{b_m\}\subset\C$ such that $\sum_n|b_n|^2=1$, we have that
 	$\|\sum_n b_n w_n\|_{L^2(D)}^2 \leq B_1$. Thus,
 	$$
 	\|\sum_n b_nw_n  \|_{L^2(D' )}^2= \|\sum_n b_nw_n\|_{L^2(\Omega)}^2 -\|\sum_n b_n w_n\|_{L^2(D)}^2\ge A -B_1.
 	$$
 	We also have 
 	
 	$$
 	\sup_{\{b_n\}}\|\sum_n b_nw_n  \|_{L^2(D' )}^2 = B_2\leq B_1
 	$$
 	as required.
 	
 	{\bf  3)$\Rightarrow$4)}   By  Lemma \ref{L-Bessel-seq},   we have that
 	$\sup_{f\in L^2(D')}\sum_m  |\l w_m,\, f\r _{L^2(  D') }|^2=B_3  \|f\|^2  \leq B_2\|f\|^2 $.
 	
 	{\bf  4)$\Rightarrow$5)}   We apply  1) $\Rightarrow$ 2) with $\beta=B_3$. We obtain that ${\cal C} $ is a frame  on $D'$ with bound $B_4\leq  B_3$ and $A_4\ge  A- B_3$.
 	
 	{\bf  5)$\Rightarrow$6)}   If $B_4<A$,  from 2) $\Rightarrow$ 3) follows that ${\cal C} $ is a Riesz sequence on $D$ with frame bounds $B_5\leq B_4$ and $A_5\ge  A-B_4 $.
 \end{proof}
 
 \subsection{Example: An exponential basis for an octagon} We consider the octagon $O$ shown in the figure 1 below. 
 \begin{figure}[ht!]
 	\begin{center}
 		\begin{tikzpicture}\label{  Octagon}
 			
 			\draw [black] (0,0) --(4,0)--(4,4)--(0,4)--(0,0);     
 			\fill[red!40!white,draw=black] (0,0)--(1,0)--(0,1)--cycle;
 			\fill[red!40!white,draw=black] (4,0)--(3,0) --(4,1)--cycle;
 			\fill[red!40!white,draw=black] (0,4)--(0,3) --(1,4)--cycle;
 			\fill[red!40!white,draw=black] (4,4)--(3,4) --(4,3)--cycle;

 			\draw    (-0.1,-0.2) node [black]{{\small $(0,0)$}};
 			\draw    (4.1,4.2) node [black]{{\small $(4,4)$}};
 			\draw    (4.1,-0.2) node [black]{{\small $(4,0)$}};
 			\draw    (-0.1,4.2) node [black]{{\small $(0,4)$}};
 			\draw    (2,2) node [black]{{\small $O$}};
 			\draw    (0.9,-0.2) node [black]{{\small $(1,0)$}};
 			\draw    (2.9,-0.2) node [black]{{\small $(3,0)$}};
 			\draw    (-0.5,0.8) node [black]{{\small $(0,1)$}};
 			\draw    (4.4,0.8) node [black]{{\small $(4,1)$}};
 			\draw    (-0.5,2.8) node [black]{{\small $(0,3)$}};
 			\draw    (4.4,2.8) node [black]{{\small $(4,3)$}};
 			\draw    (0.9,4.2) node [black]{{\small $(1,4)$}};
 			\draw    (2.9,4.2) node [black]{{\small $(3,4)$}};
 			\draw    (-1,2) node [black]{{\small $S$}};

 		\end{tikzpicture}\caption{  Octagon}
 	\end{center}
 \end{figure}

Also, for any   $a,\  b>0$ we  let 
\begin{equation}\label{E-Rhombus}
 	\B(a,b)=\bigcup_{j=0}^1 \left\{e^{2\pi i \left( \left(\frac{n}{a}+\frac{j}{2a}\right)x+   \left(\frac{m}{b}+\frac{j}{2b}\right)y  \right)} \right\}_{n,m\in\Z }.
 \end{equation}
We also let
  \begin{equation*}
 	\B_1=\left\{e^{2\pi i \left( \frac{n}{4}x+   \frac{m}{4}y  \right)} \right\}_{n,m\in\Z}.
 \end{equation*}
 Note that $\B_1$ is the standard exponential basis on the square in Figure 1, and it is easy to verify that $\B(1,1) \subset \B_1$. %because 
% $$
 %e^{2\pi i \left( \left(n+\frac{j}{2 }\right)x+   \left(m+\frac{j}{2 }\right)y  \right)}=e^{2\pi i \left(   \frac{4n+2j}{4 } x+    \frac{4m+2j}{4 } y  \right)}.
 %$$
 
We prove the following.
 \begin{Thm}\label{T-octagon}
     The set  $\B_1\setminus \B(1,1)$ is an exponential basis for $L^2(O)$ with frame  bounds $2\leq A$ and $B\leq 14$.
 \end{Thm}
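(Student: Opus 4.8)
The plan is to realize the octagon as $O=\Omega\setminus D$, where $\Omega=[0,4]^2$ and $D$ is the union of the four corner triangles that have been removed (each a right isosceles triangle with legs of length $1$, so $|D|=2$ and $|O|=14$), and then to invoke Proposition~\ref{P-compl-Rs} for the pair $\mathcal N=\B_1$, $\mathcal C=\B(1,1)$. Here $\B_1$ is an orthogonal basis of $L^2(\Omega)$ all of whose vectors have norm $4$, hence a Riesz basis of $L^2(\Omega)$ with frame bounds $A=B=16$, and $\B(1,1)\subset\B_1$, so $\mathcal C':=\B_1\setminus\B(1,1)$ is the complement of $\mathcal C$ in $\mathcal N$. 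The whole argument then reduces to verifying hypothesis (1) of Proposition~\ref{P-compl-Rs}, namely that $\B(1,1)$ is a frame on $L^2(D)$ with an upper bound strictly less than $16$; I will in fact show that $\B(1,1)$ is a Riesz basis of $L^2(D)$ with frame bounds $A=B=2$.

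To establish this, I would first apply Theorem~\ref{T1-N-interv} with $d=2$, $\delta_1=(0,0)$, $\delta_2=(\tfrac12,\tfrac12)$, $p_1=(0,0)$, $p_2=(1,0)$: then $\B(\delta_1,\delta_2)=\B(1,1)$, the matrix ${\bf\Gamma}$ has rows $(1,1)$ and $(1,-1)$, so it is nonsingular and ${\bf\Gamma}{\bf\Gamma}^*=2I$, whence $\B(1,1)$ is a Riesz basis of $L^2\big(T((0,0),(1,0))\big)$ with frame bounds $2$ and $2$; translating by $(\tfrac12,\tfrac12)$ (Lemma~\ref{L-dil-basis} with $\rho=1$) we may take the underlying set to be the rectangle $R_0=[0,2)\times[0,1)$. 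The crucial geometric step is to cut $R_0$, up to a null set, into the four triangles $P_1=\{x+y<1\}\cap[0,1)^2$, $P_2=\{x+y>1\}\cap[0,1)^2$, $P_3=\{x-y>1\}\cap([1,2)\times[0,1))$, $P_4=\{x-y<1\}\cap([1,2)\times[0,1))$, and to translate them by $v_1=(0,0)$, $v_2=(3,3)$, $v_3=(2,0)$, $v_4=(-1,3)$; a direct check shows that $P_1+v_1,\ldots,P_4+v_4$ are precisely the four (pairwise disjoint) corner triangles, so their union is $D$. Every $v_j$ lies in the lattice $L=\{(a,b)\in\Z^2:a+b\ \text{even}\}$, whose dual lattice $L^\ast$ equals the frequency set $\Z^2\cup\big((\tfrac12,\tfrac12)+\Z^2\big)$ of $\B(1,1)$; consequently $e^{2\pi i\langle\lambda,v_j\rangle}=1$ for every frequency $\lambda$ of $\B(1,1)$ and every $j$, so the basis that Lemma~\ref{l-Transform} produces from $\B(1,1)$ on $L^2(R_0)$ coincides with $\B(1,1)$ itself, now regarded on $L^2(D)$, and it carries the same frame bounds $A=B=2$.

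With hypothesis (1) in force ($\alpha=\beta=2<16$), Proposition~\ref{P-compl-Rs} yields in succession: $\mathcal C'=\B_1\setminus\B(1,1)$ is a frame on $L^2(D)$ with $A_1\ge14$ and $B_1\le14$; since $B_1<16$, $\mathcal C'$ is a Riesz sequence on $L^2(O)$ with $2\le A_2\le B_2\le14$; and, running the chain to item (5), $\B(1,1)$ is a frame on $L^2(O)$ with upper bound $B_4\le14$. A final complementation then finishes the proof: $\B_1$ restricted to $L^2(O)$ is a tight frame with bound $16$ (extend functions by zero to $\Omega$), and since $\B_1=\B(1,1)\sqcup\mathcal C'$, for every $f\in L^2(O)$ we get $\sum_{v\in\mathcal C'}|\langle f,v\rangle|^2=16\|f\|^2-\sum_{v\in\B(1,1)}|\langle f,v\rangle|^2\ge(16-14)\|f\|^2$, so $\mathcal C'$ is also a frame on $L^2(O)$, with lower bound $\ge2$. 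Being simultaneously a frame and a Riesz sequence on $L^2(O)$, $\mathcal C'$ is a Riesz basis there, and its frame bounds satisfy $2\le A$ and $B\le14$ (indeed $B=14$, since trivially $B\ge|O|=14$).

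The step I expect to be the actual obstacle is the dissection above: one must reassemble the four corner triangles into the $2\times1$ rectangle $R_0$ using only translations drawn from the lattice $L$ dual to the frequency set of $\B(1,1)$, which is precisely what keeps the transported basis equal to $\B(1,1)$ rather than a merely piecewise-exponential basis. Once this is pinned down, the rest is bookkeeping with Theorem~\ref{T1-N-interv}, Lemma~\ref{l-Transform} and Proposition~\ref{P-compl-Rs}. A secondary point is that the implication chain of Proposition~\ref{P-compl-Rs} only delivers the Riesz-sequence property of $\mathcal C'$ on $L^2(O)$ directly, so the final complementation against the tight frame $\B_1|_{L^2(O)}$ is what is needed to promote $\mathcal C'$ to a genuine Riesz basis.
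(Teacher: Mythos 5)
Your argument follows the same overall architecture as the paper's proof (establish that $\B(1,1)$ is a tight Riesz basis, with bounds $A=B=2$, of $L^2(\mathcal{T})$ where $\mathcal{T}$ is the union of the four corner triangles, then complement inside the orthogonal basis $\B_1$ of $L^2([0,4]^2)$ via Proposition~\ref{P-compl-Rs}), but you obtain the key sub-result by a genuinely different route. The paper assembles the four triangles into the rhombus $R_{1,1}$ and invokes Fuglede's theorem; you instead apply Theorem~\ref{T1-N-interv} to the $2\times 1$ rectangle $T((0,0),(1,0))$, where ${\bf\Gamma}{\bf\Gamma}^*=2I$, and then dissect that rectangle into four triangles translated onto the corners. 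Your version has two merits: it avoids Fuglede's theorem, and it makes explicit a point the paper glosses over, namely that Lemma~\ref{l-Transform} in general yields only a piecewise\--exponential system, so one must check that every translation lies in the dual lattice of the frequency set $\Z^2\cup\big((\tfrac12,\tfrac12)+\Z^2\big)$; your translations $(0,0),(3,3),(2,0),(-1,3)$ all lie in $\{(a,b)\in\Z^2:\ a+b\ \text{even}\}$, whose dual is exactly that set, so the transported system is $\B(1,1)$ itself. You are also more careful than the paper at the end: the chain in Proposition~\ref{P-compl-Rs} literally delivers only the Riesz\--sequence property of $\mathcal{C}'$ on $L^2(O)$, and your final complementation against the tight frame $\B_1|_{L^2(O)}$ is genuinely needed to upgrade this to a Riesz basis.

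Two caveats, both inherited from the paper rather than introduced by you. First, your final complementation needs the Bessel bound $14$ for $\B(1,1)$ on $L^2(O)$, which you extract from items (4)--(5) of Proposition~\ref{P-compl-Rs}; the inequalities $B_2\le B_1$, $B_3\le B_2$, $B_4\le B_3$ asserted there are not actually justified by the proposition's proof. The bound you need is nevertheless true and has a one-line direct proof: since $\B(1,1)$ is a Riesz sequence on $L^2(\mathcal{T})$ with lower bound $2$, $\big\|\sum a_v v\big\|^2_{L^2(O)}=16\sum|a_v|^2-\big\|\sum a_v v\big\|^2_{L^2(\mathcal{T})}\le 14\sum|a_v|^2$, and Lemma~\ref{L-Bessel-seq} converts this into the Bessel bound $14$; I would substitute this for the appeal to item (5). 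Second, the upper bound $B\le 14$ (and your parenthetical ``$B=14$'') rests on the unproved inequality $B_2\le B_1$ and appears to be false: $\mathcal{C}'$ has density $14$ and so cannot be a Riesz sequence in $L^2(\mathcal{T})$ with $|\mathcal{T}|=2$; being a redundant frame there, its synthesis operator into $L^2(\mathcal{T})$ has a nontrivial kernel, and any nonzero kernel element $\{b_w\}$ gives $\big\|\sum b_w w\big\|^2_{L^2(O)}=16\sum|b_w|^2$, so the optimal upper Riesz (hence frame) bound on $L^2(O)$ is $16$, not $14$. This does not affect the existence of the basis or the lower bound $A\ge 2$, but the claim $B\le 14$ should be weakened to $B\le 16$ in your write-up (as it should in the theorem itself).
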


Theorem \ref{T-octagon} will be proven after a few preparatory steps.  The  following celebrated theorem, proved by B. Fuglede in 1974  \cite{Fuglede1974}, shows that any  domain  that tiles $\R^n$ with a lattice of translations possesses an orthogonal exponential basis.
 
Recall that a lattice  is  the image of $\Z^n$ under some invertible linear transformation $L:\R^n\to\R^n.$ The dual lattice $\Lambda^*$ is the set of all vectors $\lambda^*\in\R^n$ such that $\langle\lambda,\, \lambda^*\rangle\in\Z$ for every  $\lambda\in\Lambda$.

 \begin{Thm}[Fuglede's Theorem]\label{T-Fuglede}
 	Let $\Lambda\subset \R^n$ be a lattice.  If $S$ tile  $\R^n$  then the dual lattice $\Lambda^*$ is a spectrum for $S$, and also the converse is true.
 \end{Thm}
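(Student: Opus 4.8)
The plan is to prove the lattice case of Fuglede's theorem through the \emph{periodization} of $\chi_S$ and Fourier analysis on the torus $\mathbb{T}=\R^n/\Lambda$. First I would write $\Lambda=L\Z^n$ for an invertible $L$, so that the dual lattice is $\Lambda^*=(L^{-1})^{T}\Z^n$ and $\l\lambda,\,\lambda^*\r\in\Z$ for all $\lambda\in\Lambda$, $\lambda^*\in\Lambda^*$. The functions $e_{\lambda^*}(x)=e^{2\pi i\l\lambda^*,x\r}$ are then precisely the $\Lambda$-periodic characters; they descend to $\mathbb{T}$ and form a complete orthogonal system of $L^2(\mathbb{T})$, each of squared norm equal to the covolume $|\det L|$. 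The starting point is the identity
\begin{equation*}
\l e_{\lambda_1^*},\, e_{\lambda_2^*}\r_{L^2(S)}=\int_S e^{2\pi i\l\lambda_1^*-\lambda_2^*,\,x\r}\,dx=\widehat{\chi_S}(\lambda_2^*-\lambda_1^*),
\end{equation*}
which reduces both orthogonality and completeness of $E(\Lambda^*)=\{e_{\lambda^*}\}_{\lambda^*\in\Lambda^*}$ to statements about $\widehat{\chi_S}$ on $\Lambda^*$.

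Next I would introduce the periodization $g_S(x)=\sum_{\lambda\in\Lambda}\chi_S(x-\lambda)$, a nonnegative, integer-valued, $\Lambda$-periodic function in $L^1(\mathbb{T})$ whose integral over a fundamental domain equals $|S|<\infty$. A direct unfolding computation shows that the Fourier coefficient of $g_S$ at frequency $\lambda^*$ equals $|\det L|^{-1}\widehat{\chi_S}(\lambda^*)$. Combined with the identity above, this yields the key equivalence: $E(\Lambda^*)$ is orthogonal in $L^2(S)$ if and only if $\widehat{\chi_S}(\lambda^*)=0$ for every $\lambda^*\in\Lambda^*\setminus\{0\}$, that is, if and only if $g_S$ is a.e.\ equal to the constant $c=|S|/|\det L|$. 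Being an a.e.-constant, integer-valued function, this $c$ is forced to be a nonnegative integer.

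The heart of the argument is the characterization of completeness. Let $\pi:\R^n\to\mathbb{T}$ be the quotient map and, assuming orthogonality (so $g_S\equiv c$), define the fiber-sum operator $(\pi_* f)(t)=\sum_{x\in\pi^{-1}(t)\cap S}f(x)$, which maps $L^2(S)$ into $L^2(\mathbb{T})$ over the $c$-point fibers of $\pi|_S$. Since each $e_{\lambda^*}$ is $\Lambda$-periodic, one checks that $\l f,\,e_{\lambda^*}\r_{L^2(S)}=\widehat{\pi_* f}(\lambda^*)$; because the torus characters are complete, $E(\Lambda^*)$ is complete in $L^2(S)$ exactly when $\pi_* f=0$ forces $f=0$. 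When $c\ge 2$ the generic fiber has at least two points, so one can build a nonzero $f$ with vanishing fiber sums, and completeness fails; when $c=1$ the map $\pi|_S$ is an a.e.\ bijection, $\pi_* f=f\circ(\pi|_S)^{-1}$, and completeness holds. Thus $E(\Lambda^*)$ is an orthogonal basis of $L^2(S)$ if and only if $c=1$, i.e.\ $g_S\equiv1$, which is precisely the statement that $S$ tiles $\R^n$ by $\Lambda$. Both implications of the theorem then follow at once.

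I expect the main obstacle to be the rigorous handling of the fold/fiber-sum operator $\pi_*$: showing that $g_S\equiv c$ makes $\pi|_S$ an a.e.\ $c$-to-one measure-preserving map, that $\pi_*$ is a well-defined bounded operator whose Fourier coefficients reproduce the inner products $\l f,\,e_{\lambda^*}\r_{L^2(S)}$, and that for $c\ge2$ one can genuinely select a \emph{measurable} nonzero $f$ with $\pi_* f=0$ (a measurable-selection argument on the fibers). By contrast, the orthogonality half is routine once the periodization identity is in place.
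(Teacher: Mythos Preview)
The paper does not actually prove Theorem~\ref{T-Fuglede}; it is stated as a classical result of Fuglede \cite{Fuglede1974} and invoked as a black box in the octagon example (Corollary following the theorem). There is therefore no ``paper's own proof'' to compare against.

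That said, your proposal is the standard and correct proof of the lattice case. The periodization identity for $g_S$ and the equivalence between orthogonality of $E(\Lambda^*)$ and $g_S\equiv c$ a.e.\ are exactly right, as is the dichotomy on $c$ for completeness. The one place you correctly flag as delicate---producing, when $c\ge 2$, a measurable nonzero $f$ with vanishing fiber sums---can be handled without a general measurable-selection theorem: since $g_S\equiv c$ and $g_S$ is a sum of indicators over the countable set $\Lambda$, a greedy choice over an enumeration of $\Lambda$ yields a measurable $S_1\subset S$ with $g_{S_1}\equiv 1$ (a fundamental domain inside $S$); then $f=c\,\chi_{S_1}-\chi_S$ is nonzero and satisfies $\pi_* f=0$. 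With that detail filled in, your argument is complete.
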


 Next, we  use Fuglede's theorem  to construct an exponential basis  of the rhombus $ R_{a,b}$  in Figure 2.   
        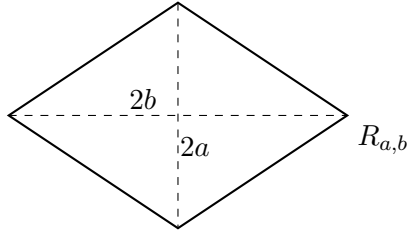
\begin{figure}[ht!]
       \begin{center}
      	\begin{tikzpicture}[scale=1.5]
 		% Coordinates of vertices (diagonals intersect at origin)
 		\coordinate (A) at (-1.5, 0);   % Half of 3 (horizontal)
 		\coordinate (B) at (0, 1);      % Half of 2 (vertical)
 		\coordinate (C) at (1.5, 0);    % Half of 3 (horizontal)
 		\coordinate (D) at (0, -1);     % Half of 2 (vertical)
 		
 		% Draw rhombus
 		\draw[thick] (A) -- (B) -- (C) -- (D) -- cycle;
 		
 		% Draw diagonals
 		\draw[dashed] (A) -- (C);
 		\draw[dashed] (B) -- (D);
 		
 		% Optional: label the vertices
 		%\node[above left] at (A) {$R};
 		%\node[above right] at (B) {$B$};
 	 \node[below right] at (C) {$R_{a,b}$};
 		%\node[below left] at (D) {$D$};
 		
 		% Optional: label diagonals
 		\node[below] at (.15, -0.1) {$2a$};
 		\node[left] at (-0.1, .15) {$2b$};
 	\end{tikzpicture} \caption{A rhombus with diagonals 2a and 2b}     
       \end{center}

 \end{figure}
 \begin{Cor}
     $\B(a,b)$ defined in \eqref{E-Rhombus} is an orthogonal exponential basis for $L^2(R_{a,b})$ with frame  bounds $A=B=2ab$.
 \end{Cor}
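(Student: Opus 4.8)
The plan is to realize $R_{a,b}$ as a fundamental domain of a lattice and then invoke Fuglede's theorem (Theorem~\ref{T-Fuglede}). First I would place $R_{a,b}$ with vertices $(-a,0)$, $(0,b)$, $(a,0)$, $(0,-b)$, so that $|R_{a,b}|=\tfrac12(2a)(2b)=2ab$. Reading the vertices cyclically, $R_{a,b}$ is the parallelogram issuing from $(-a,0)$ with edge vectors $u=(a,b)$ and $v=(a,-b)$ (indeed $u+v=(2a,0)$ is the opposite vertex). Hence, up to translation, $R_{a,b}=\{su+tv:\ 0\le s,t\le 1\}$, and the translates of $R_{a,b}$ by the lattice $\Lambda=\Z u+\Z v$ tile $\R^2$. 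By Theorem~\ref{T-Fuglede}, the dual lattice $\Lambda^*$ is a spectrum of $R_{a,b}$; that is, $\{e^{2\pi i\langle\lambda^*,(x,y)\rangle}\}_{\lambda^*\in\Lambda^*}$ is an orthogonal basis of $L^2(R_{a,b})$.

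The next step is to identify $\Lambda^*$ explicitly. A vector $(\xi,\eta)$ lies in $\Lambda^*$ iff $\langle u,(\xi,\eta)\rangle=a\xi+b\eta\in\Z$ and $\langle v,(\xi,\eta)\rangle=a\xi-b\eta\in\Z$. Writing these integers as $p$ and $q$ and solving, $\xi=\tfrac{p+q}{2a}$, $\eta=\tfrac{p-q}{2b}$, and since $p+q$ and $p-q$ have the same parity we obtain
$$\Lambda^*=\left\{\left(\tfrac{s}{2a},\tfrac{t}{2b}\right):\ s,t\in\Z,\ s\equiv t \pmod 2\right\}.$$
Splitting according to the common parity of $s$ and $t$ then matches this frequency set with the exponent set of $\B(a,b)$ in \eqref{E-Rhombus}: the even case $s=2n$, $t=2m$ produces the $j=0$ family $\{(\tfrac{n}{a},\tfrac{m}{b})\}$, while the odd case $s=2n+1$, $t=2m+1$ produces the $j=1$ family $\{(\tfrac{n}{a}+\tfrac{1}{2a},\tfrac{m}{b}+\tfrac{1}{2b})\}$, and the union exhausts $\Lambda^*$. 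Therefore $\B(a,b)$ is precisely the orthogonal exponential basis of $L^2(R_{a,b})$ furnished by Fuglede's theorem.

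Finally, to pin down the frame bounds, note that each element of $\B(a,b)$ has $L^2(R_{a,b})$-norm equal to $\sqrt{|R_{a,b}|}=\sqrt{2ab}$, and the elements are mutually orthogonal; hence for any finite sum $\|\sum_j a_j v_j\|^2=2ab\sum_j|a_j|^2$, giving the (optimal) frame bounds $A=B=2ab$. The only genuine computation in the argument is the dual-lattice identification together with the parity bookkeeping, so I do not expect a real obstacle; the one point requiring a word of care is that the tiling by $\Lambda$ and the half-open realizations of $R_{a,b}$ agree only up to a set of measure zero, which is harmless for the $L^2$ theory. (Alternatively, one could first reduce to a normalized rhombus using Lemma~\ref{L-dil-basis} and then argue as above; the Fuglede route above already handles the general $a,b$ directly.)
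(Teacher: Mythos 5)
Your proposal is correct and follows essentially the same route as the paper: tile $\R^2$ by translates of the rhombus under the lattice generated by $(a,b)$ and $(a,-b)$ (the paper writes this same lattice as $\{(2an,2bm)\}\cup\{(2an+a,2bm+b)\}$), apply Fuglede's theorem, identify the dual lattice with the frequency set of $\B(a,b)$, and read off $A=B=|R_{a,b}|=2ab$ from orthogonality. Your explicit parity bookkeeping for $\Lambda^*$ is just a more detailed version of the paper's ``we can see at once'' step.
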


 \begin{proof}
It is easy to see that $R$ tiles $\R^2$ with lattice of translations  $\Lambda=\{(2a n, 2 b m)\cup (2a n+a, 2 b m+b)\}_{m, n\in\Z}$. So, all $\lambda^*\in\Lambda^*$ should satisfy 
$\langle (2a n, 2 b m),\, \lambda^*\rangle  \in \Z $ and $
 		\langle (a, b),\, \lambda^*\rangle   \in \Z
 	 $ whenever  $n,m\in \Z$.  We can see at once that $$\Lambda^*=\Big\{(\frac{n}{a}, \frac{m}{a})\cup (\frac{n}{a}+\frac{1}{2a}, \frac{m}{b}+\frac{1}{2b})\Big\}_{m, n\in\Z}$$  and by Fuglede's theorem, 
 $\B(a,b)$ is an orthogonal Riesz basis for $L^2(R_{a,b})$. The constants $A$ and $B$ are both equal to the measure of $R_{a,b}$, which is $2ab$. 
 \end{proof}

 \begin{proof}[Proof of Theorem \ref{T-octagon}]
    We consider  the octagon $O$ in Figure 2.   We denote the  union of the  four corner triangle in the figure  with $ {\cal T}$.  The triangles can be joined so as to form  the rhombus  $R=R_{1,1}$.   Lemma \ref{l-Transform} yields  that    $\B(1,1)$ is a basis for both $L^2(R_{1,1})$ and $L^2({\cal T})$. Also, $\B_1$ is an orthogonal Riesz basis for  on the rectangle $S=[0, 4]\times [0,4]$ with frame  bounds $A=B=16$. 

  By Proposition \ref{P-compl-Rs} $\B_1\setminus \B(1,1)$ is a Riesz basis for $L^2(O)$ with frame  bounds $2\leq A$ and $B\leq 14$.
 \end{proof}

\section*{Section 3} 
  \setcounter{section}{3}
  In this section  we find a well-conditioned basis on the non-convex domain $D=R_1\cup R_2$ in  Figure 3.  The set $D$ tiles $\R^2$ at level $2$ when translated by the  lattice $\Lambda=\{(n, \frac m4),\ m,\, n\in\Z\} $, i.e., for every $(x,y)\in\R^2$, with the possible exception of a set of measure zero, we have 
 $$
 \sum_{n, m\in\Z} \chi_D (x- n,\, y-\frac m4)=2 .  
 $$
 \begin{figure}[ht] 
 	\begin{center}
 		\begin{tikzpicture}
 			\draw[black, ->] (0, 0)--(5, 0);
 			\draw[black, ->] (0, 0)--(0, 5);
 			\draw [black ] (0,0) --(4,0)--(4,1)--(0,1)--(0,0);
 			\fill[lightgray ](0,0) --(4,0)--(4,1)--(0,1)--(0,0);
 			\draw [black] (0,1) --(0,2)--(2,4)--(4,2)--(4,1)--(2,3)--(0,1);
 			
 			\fill[lightgray ](0,1) --(0,2)--(2,4)--(4,2)--(4,1)--(2,3)--(0,1);
 			\draw    (-0.5,-0.5) node [black]{{\small $(0,0)$}};
 			\draw    (4.5,-0.5) node [black]{{\small $(1, 0)$}};
 			\draw    (2,3.5) node [black]{{\small $R_2$}};
 			\draw    (2,0.5) node [black]{{\small $R_1$}};
 			\draw    (-0.5,1) node [black]{{\small $(0,\frac{1}{4})$}};
 			\draw (-0.5, 2)  node [black]{{\small $(0,\frac{1}{2})$}};
 			\draw    (4.5,1) node [black]{{\small $(1, \frac 14)$}};
 		 %\draw    (4.5,2) node [black]{  $D$} ;
 			\draw    (2,4.5) node [black]{{\small $(\frac{1}{2},1)$}};
 			\draw    (2,2.3) node [black]{{\small $(\frac{1}{2},\frac{3}{4})$}};
 			
 		\end{tikzpicture}\caption { The domain $D=R_1\cup R_2$ }
 	\end{center}
  \label{f1}
 \end{figure}
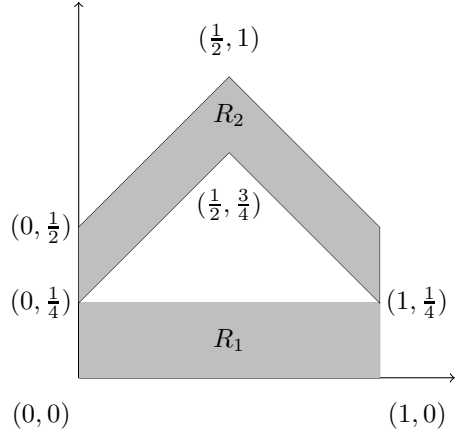

 Every multi-tiling set   has  a Riesz basis of exponentials, and \cite{Kol2015} provides some insight into the structure of this basis.    However, the  results in the aforementioned papers do not allow to construct an exponential basis on $L^2(D)$ and to estimate its frame constant, but we use the methods described in Section 2 and the results stated in the next subsection  to prove the following.

 \begin{Thm}\label{T-main}
 Let $D$ be the domain in Figure 3.	For every $n\in\N$, we let	%
 	$$
 		\B'_N =\bigcup_{j=0}^{2^N-1}\left\{e^{2 \pi i ((2^Nn+\frac{4 j}{2^N})x+(2^Nm+\frac{4 j}{2^N})y)}, \  e^{2 \pi i ((n+\frac{4 j}{2^N}+\frac{2}{2^N})x+(m+\frac{4 j}{2^N}+\frac{2}{2^N})y)}\right\}_{n, m\in\Z}
 $$
 	and we let $\B=\bigcup_{N\in\N}\B'_{N}.$ Then $\B$ 
 	is a Riesz basis for $L^2(D )$, with frame  bounds 
 	$$
 	B=\frac{2+\sqrt{2} }{4} \qquad \text{and } \qquad 
 	A= \frac{2-\sqrt{2} }{4}. 
 	$$
 \end{Thm}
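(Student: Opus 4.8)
The plan is to realize $\mathcal{B}$ as a limit of explicit Riesz bases living on multi-rectangle approximations of $D$, via Lemma~\ref{L-limit-bases}, and to compute the frame bounds through the extreme singular values of an associated Vandermonde-type matrix, using the results of Section~3.1. I would begin by recording the features of $D$ that drive the construction. Writing $D=R_1\cup R_2$ with $R_1=[0,1)\times[0,\tfrac14)$ and $R_2=\{(x,y):0\le x<1,\ \varphi(x)\le y<\varphi(x)+\tfrac14\}$, $\varphi(x)=\tfrac34-|x-\tfrac12|$ (the region between the two parallel broken lines of slope $\pm1$ in Figure 3), one checks that $R_1$ and $R_2$ each tile $\R^2$ by the lattice $\Lambda=\Z\times\tfrac14\Z$, so that $D$ multi-tiles $\R^2$ at level $2$ and $|R_1|=|R_2|=\tfrac14$. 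The key geometric point is that $\varphi$ has integer slopes, so that when $[0,1]$ is cut at dyadic points the broken graph of $\varphi$ passes through lattice points of refinements of $\Lambda$.

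Accordingly, for each $N$ I would set $D_N=R_1\cup R_2^N$, where $R_2^N\subset R_2$ is obtained by replacing $\varphi$ by the step function equal to $\varphi$ at the points $j2^{-N}$, $0\le j\le 2^N$; the dyadic refinements are nested, so $R_2^N\uparrow R_2$ and $D_N\uparrow D$ with $|D\setminus D_N|\to 0$. Passing to the finer lattice $\Lambda_N$ of mesh $2^{-N}$ in $x$ and $2^{-N-2}$ in $y$ (and using the rescaling of Lemma~\ref{L-dil-basis}), $D_N$ becomes a genuine multi-rectangle, i.e.\ a finite union of translates of a fixed cube, because the integer-slope hypothesis forces the staircase vertices onto $\Lambda_N$. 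Lemma~\ref{L-limit-bases} then reduces the theorem to producing, for every $N$, a Riesz basis $\mathcal{B}_N$ of $L^2(D_N)$ with $\mathcal{B}_{N-1}\subset\mathcal{B}_N$ and with frame bounds independent of $N$; the required uniform $L^\infty$ bound is automatic since all elements of $\mathcal{B}$ are unimodular exponentials.

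For the second step I would build $\mathcal{B}_N$ on the multi-rectangle $D_N$. One route is to feed $D_N$ into Theorem~\ref{T-Segm-rect}, transferring the problem---with the same frame bounds---to the construction of an exponential basis on an explicit finite union of intervals $I_N\subset\R$, and then to invoke Theorem~\ref{T1-N-interv} together with the eigenvalue estimates of Section~3.1 to obtain a basis on $I_N$ whose optimal frame bounds are the extreme singular values of a structured matrix $\Gamma_N$; Lemma~\ref{l-Transform} and Lemma~\ref{L-dil-basis} then pull the basis back to $D_N$, and a bookkeeping check shows that the frequency shifts can be chosen so that $\mathcal{B}_N=\bigcup_{n\le N}\mathcal{B}'_n$ with $\mathcal{B}'_n$ as displayed. (An alternative, parallel to the octagon example, is to view $D_N$ as a rectangle-union $\Omega_N$ minus a set of staircase triangles that reassemble into a rhombus and to apply Proposition~\ref{P-compl-Rs} to the standard basis of $\Omega_N$; the same matrix governs the frame bounds.) Either way, the crux is to show that although the size of $\Gamma_N$ grows with $N$, after a unitary change of basis reflecting the level-$2$ multi-tiling and the fact that, over positive-measure sets of fibers, the two sheets are related by the three lattice vectors $(0,\tfrac14),(0,\tfrac12),(0,\tfrac34)$, the matrix $\Gamma_N$ decouples into a direct sum of the $2\times2$ Vandermonde blocks $\bigl(\begin{smallmatrix}1&1\\1&\omega^k\end{smallmatrix}\bigr)$, $k\in\{1,2,3\}$, with $\omega$ a primitive fourth root of unity (the shift built into $\mathcal{B}'_n$). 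These blocks have $\Gamma\Gamma^{*}$-eigenvalues $2\pm\sqrt2$ for $k=1,3$ and $2,2$ for $k=2$, so the extreme singular values of every $\Gamma_N$ are $\sqrt{2\pm\sqrt 2}$; after normalizing by the cell measure $\tfrac14=|R_1|$ one gets that each $\mathcal{B}_N$ has frame bounds exactly $\tfrac{2-\sqrt2}{4}$ and $\tfrac{2+\sqrt2}{4}$.

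Finally, applying Lemma~\ref{L-limit-bases} with $\alpha=\tfrac{2-\sqrt2}{4}$ and $\beta=\tfrac{2+\sqrt2}{4}$ shows that $\mathcal{B}=\bigcup_N\mathcal{B}_N$ is a Riesz basis of $L^2(D)$ with exactly these frame bounds. I expect the main obstacle to be the third step: verifying the Vandermonde/block decomposition of $\Gamma_N$ and, with it, the uniform-in-$N$ control of its extreme singular values---this is precisely where the Vandermonde-eigenvalue material of Section~3.1 is needed, and it requires tracking carefully how the dyadic refinement of $D_N$ interacts with the two-sheeted structure of the multi-tiling, so that the basis at level $N-1$ extends to the one at level $N$ and no new singular values are created in the limit. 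As a consistency check, the same bounds fall out of the direct level-$2$ multi-tiling construction, in which the matrix $\bigl(\begin{smallmatrix}1&1\\1&i\end{smallmatrix}\bigr)$ and the factor $|R_1|=\tfrac14$ appear transparently; the limit argument above is the one that stays inside the toolkit of Section~2.
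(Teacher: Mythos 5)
Your proposal follows essentially the same route as the paper's proof: dyadic multi-rectangle approximations $D_N$ of $D$, dilation and reduction to a union of intervals via Theorem~\ref{T-Segm-rect}, an application of Theorem~\ref{T1-N-interv} in which the matrix $\G_N$ is clustered (Lemma~\ref{L-Lemma5.1}) into mutually orthogonal $2\times 2$ blocks whose Gram eigenvalues are $2\pm\sqrt2$ and $2,2$ up to the scaling factor, and finally Lemma~\ref{L-limit-bases} with $\alpha=\frac{2-\sqrt2}{4}$, $\beta=\frac{2+\sqrt2}{4}$. The block decomposition you identify as the crux is exactly the clustering computation carried out in Step~e) of the paper, so the plan is correct and matches the paper's argument.
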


  Before proving the theorem, we need to introduce some results on  the singular values of Vandermonde matrices and other preliminary results. We will so so in the next sub-section.

 \subsection{Exponential bases on multi-rectangles and Vandermonde matrices}\label{Section-EBandMatrices}
  Vandermonde matrices play an important part in the construction of our example.  An  $L\times n$ Vandermonde matrix is a matrix in the form of 
 $$
 A= \left(\begin{matrix}
 	1   &1  & \cdots  &1
 	\cr
 	x_1 & x_2& \cdots & x_n
 	\cr
 	\vdots & \vdots & \vdots & \vdots
 	\cr
 	x_1^{L-1} & x_2^{L-1}  &\cdots & x_n^ {L-1}
 	\end{matrix}\right),
 	$$	
where $x_j\in\C$. The  $x_k $ are sometimes called {\it nodes}. If $n=L$, the determinant of $A$ is given by 
$ \det{A}= \prod_{0\leq  k<h\leq L-1} (x_k-x_h) $, and so $A$ is nonsingular if and only if $x_k\ne x_h$ whenever $k\ne h$.  

We consider Vandermonte matrices  where $x_k= e^{2\pi i y_k}$, with $y_k\in [0,1)$; we refer to $y_k$ as to the 
  {\it frequency} of the node $x_k$. 
 
	Square and rectangular Vandermonde matrices have been extensively studied by numerical and harmonic  
	analysts due to their close relation to polynomial interpolation and approximation,   the problem of mathematical super-resolution and more. See e.g.  \cite{LiLiao2021} and \cite{BATENKOV2021} and the references cited there.

\medskip

Estimating the singular values of Vandermonde matrices is a significant and challenging problem. One approach to this task is the \emph{clustering method}, which involves partitioning the columns of the matrix into groups (clusters).
This technique is discussed in various articles,   \cite{BATENKOV2021, LiLiao2021} among them. It is particularly effective for rectangular Vandermonde matrices, although certain results extend to square cases as well.

Let $ V_k$ %=\{L_j\}_k$ 
be a set of column vectors of a Vandermonde matrix $A$;    the angle  between the vectors  of the subspace spanned by  $V_k$ and $V_k'$ is  defined as
\begin{equation}\label{E-angle_def}
\angle_{\min} (V_k,V_{k'}) = \min_{v\in V_k,\, w\in V_{k'}}  \cos^{-1}{\left(\frac{|\langle v,\, w\rangle |}{\|v\|\,\|w\|}\right)}. 
\end{equation}  
 
The following lemma provides a simple tool for clustering the columns of a Vandermonde matrix and for estimating its eigenvalues. 
\begin{Lemma}\label{L-Lemma5.1}
    Let $A\in\C^{N \times s}$,   given in the following block form
    $$
    A=[A_1,...,A_M],
    $$
with $A_{k}\in \C^{N\times s_k}$ and $\sum_{k=1}^M s_k=s$. Let $V_k\subset \C^N$ be the subspace spanned by the columns of the
sub-matrix $A_k$.  Assume that  there exists $\alpha\in [0, \frac 1N]$  such that for all $1\leq k,k'\leq M$, $k\neq k'$ 
\begin{equation}\label{E-angle_condition}
    \angle_{\min} ( V_k,\, V_{k'}) \geq \frac{\pi}{2} - \alpha.
\end{equation}
Let 
$ 
\sigma_1\geq ... \geq \sigma_s
$ 
be the ordered collection of all the singular values of $A$, and let
$$
\tilde \sigma_1\geq ... \geq \tilde \sigma_s 
$$
be the ordered collection of all the singular values of the sub-matrices $\{A_k\}$. Then
\begin{equation}\label{ineq-lemma-3}
\sqrt{1-N\alpha}\, \tilde \sigma_j \leq \sigma_j \leq \sqrt{1+N\alpha}\, \tilde \sigma_j. 
\end{equation}
\end{Lemma}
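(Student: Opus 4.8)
The plan is to realize both the full matrix $A$ and the block-diagonal matrix built from the $A_k$ as operators, and to compare the associated Gram matrices. First I would introduce the block-diagonal matrix $D = \mathrm{diag}(A_1, \dots, A_M) \in \C^{(NM)\times s}$, whose singular values are exactly the collection $\{\tilde\sigma_j\}$. The key observation is that $A = E D$, where $E = [I_N, \dots, I_N] \in \C^{N\times NM}$ is the $1\times M$ block row of identity matrices; equivalently $A^*A = D^* E^* E D$, and $E^*E$ is the $M\times M$ block matrix all of whose $N\times N$ blocks equal $I_N$. So the eigenvalues of $A^*A$ are controlled by how far $D^*D$ is from $D^*E^*ED$, and the latter differs from the former precisely by the off-diagonal cross terms $A_k^* A_{k'}$, $k\ne k'$.

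Next I would quantify the off-diagonal contribution using the angle hypothesis \eqref{E-angle_condition}. For $k\ne k'$ and unit vectors $v \in V_k$, $w\in V_{k'}$, condition \eqref{E-angle_condition} gives $|\langle v, w\rangle| \le \cos(\tfrac\pi2 - \alpha) = \sin\alpha \le \alpha$. Consequently, for the off-diagonal block $A_k^* A_{k'}$ one gets an operator-norm bound of the shape $\|A_k^* A_{k'}\| \le \alpha \|A_k\|\,\|A_{k'}\|$, and summing over the $M$ blocks in each row (using Cauchy–Schwarz over the block index, or a Gershgorin-type / Schur-test argument on the $M\times M$ block structure) the total perturbation $R = A^*A - \sum_k A_k^*A_k$ satisfies, in the quadratic-form sense, $|\langle R\xi, \xi\rangle| \le N\alpha\, \langle (\oplus_k A_k^*A_k)\xi, \xi\rangle$. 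I would carry this step out by writing $\xi = (\xi_1,\dots,\xi_M)$ according to the block partition, expanding $\langle A^*A\xi,\xi\rangle = \sum_{k,k'} \langle A_{k'}\xi_{k'}, A_k\xi_k\rangle$, and estimating the $k\ne k'$ terms by $\alpha \|A_k\xi_k\|\,\|A_{k'}\xi_{k'}\|$, then recognizing the resulting bilinear form in the vector $(\|A_1\xi_1\|,\dots,\|A_M\xi_M\|)\in\R^M$ as at most $(M-1)\alpha \le N\alpha$ times $\sum_k \|A_k\xi_k\|^2$ — here the hypothesis $\alpha \le 1/N$ and $M\le N$ (forced by $N$-dimensional subspaces being nearly mutually orthogonal, or simply assumed) keeps $1 - N\alpha \ge 0$ so the square roots in \eqref{ineq-lemma-3} make sense.

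With that perturbation estimate in hand, the conclusion follows from the min–max (Courant–Fischer) characterization of eigenvalues: since $(1-N\alpha)\,\langle(\oplus_k A_k^*A_k)\xi,\xi\rangle \le \langle A^*A\,\xi,\xi\rangle \le (1+N\alpha)\,\langle(\oplus_k A_k^*A_k)\xi,\xi\rangle$ for all $\xi$, the ordered eigenvalues obey $(1-N\alpha)\,\tilde\sigma_j^2 \le \sigma_j^2 \le (1+N\alpha)\,\tilde\sigma_j^2$, and taking square roots gives \eqref{ineq-lemma-3}. The main obstacle I anticipate is the bookkeeping in the middle step: getting the clean constant $N\alpha$ (rather than something like $(M-1)\alpha$ or $\sqrt{M}\,\alpha$) out of the off-diagonal sum requires being slightly careful about which norm inequality to apply to the $M\times M$ array of cross terms, and about the relationship between $M$ and $N$ implicit in the near-orthogonality of $M$ subspaces of $\C^N$; everything else is the standard perturbation-plus-min–max routine.
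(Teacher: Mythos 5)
Your argument is correct, and it is worth pointing out that the paper itself does not prove this lemma at all: the statement is quoted with the proof deferred to Lemma 5.1 of \cite{BATENKOV2021}. So there is no in-paper proof to compare against; what you have written supplies the omitted argument, and it is the standard (and, as far as the cited source goes, essentially the expected) route. Concretely: view $\|A\xi\|^2=\|\sum_k A_k\xi_k\|^2=\sum_{k,k'}\langle A_k\xi_k,A_{k'}\xi_{k'}\rangle$, bound each cross term by $\sin\alpha\,\|A_k\xi_k\|\,\|A_{k'}\xi_{k'}\|$ using \eqref{E-angle_condition} (since $A_k\xi_k\in V_k$), sum the off-diagonal terms via Cauchy--Schwarz to get a relative perturbation of size $(M-1)\sin\alpha$ of the block-diagonal form $\sum_k\|A_k\xi_k\|^2$, and conclude by Courant--Fischer applied to $A^*A$ versus $\oplus_k A_k^*A_k$, whose ordered eigenvalues are exactly the $\tilde\sigma_j^2$. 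All of these steps are sound.

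The one place you are informal --- upgrading $(M-1)\alpha$ to $N\alpha$ --- is indeed the only delicate point in matching the statement as printed, and your instinct that $M\le N$ is forced (not merely assumable) is correct. Here is the missing one-line justification: if $M\ge N+1$, choose a unit vector $v_k\in V_k$ for $N+1$ of the blocks; these are linearly dependent in $\C^N$, so some combination with $\sum_k|c_k|^2=1$ satisfies $0=\|\sum_k c_kv_k\|^2\ge 1-\sin\alpha\sum_{k\ne k'}|c_k|\,|c_{k'}|\ge 1-N\sin\alpha$, forcing $\sin\alpha\ge \tfrac1N\ge\alpha$, which is impossible for $\alpha\in[0,\tfrac1N]$ (degenerate blocks with $V_k=\{0\}$ contribute only zero singular values on both sides and can be discarded first). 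Hence $(M-1)\alpha< N\alpha$ and your bound implies the stated one; in fact your argument yields the sharper constant $(M-1)\sin\alpha$, and the cited source states the lemma with the number of blocks rather than the row dimension in the constant, so the version proved here is if anything stronger than what the paper records.
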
   

For the proof see Lemma 5.1 in \cite{BATENKOV2021}.

\medskip
Lemma \ref{L-Lemma5.1} is quite general and applicable to any matrix. However, the  inner products in the definition \eqref{E-angle_def} involve  sums that are difficult to control. The structure of Vandermonde matrices allows us to express these sum in a more manageable form, as will be demonstrated in the following lemma.

\begin{Lemma}\label{L-sum_of exponents}
    Let $\{a_1,...,a_m\}\subset \R$, $\delta \in \R$, and $L_k=[1,e^{2 \pi i a_k \delta},...,e^{2 \pi i a_k (m-1) \delta}]^T$, where $ T$ denotes the transpose. Assume that  $\delta(a_k-a_{k'})\not\in\Z$  for all $k\neq k'$. Then,
        \begin{equation}\label{E-sin_ratio}
        |\langle L_k,L_{k'}\rangle | = \left\lvert \frac{\sin{(\pi m (a_k-a_{k'}) \delta)}}{\sin{(\pi(a_k-a_{k'}) \delta)}} \right\rvert.
    \end{equation}
    \begin{proof}
       % For all $k\neq k'$ and $\delta(a_k-a_{k'})\not\in\Z$ 
       We use the formula for the sum of a geometric series and some trigonometrical simplifications  to write
        \begin{align*}
            |\langle L_k,L_{k'}\rangle | &= |\sum_{j=0}^{m-1} e^{2\pi i (a_k-a_{k'})\delta j} |= \left\lvert \frac{1-e^{2 \pi i m (a_k-a_{k'})\delta}}{1-e^{2 \pi i (a_k-a_{k'})\delta}} \right\rvert \\
            &= \left\lvert \frac{\sin{(\pi m (a_k-a_{k'}) \delta)}}{\sin{(\pi(a_k-a_{k'}) \delta)}} \right\rvert.
        \end{align*}
    \end{proof}
\end{Lemma}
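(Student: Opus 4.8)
The plan is to reduce the Hermitian inner product to a finite geometric sum and evaluate it in closed form. First I would write the inner product on $\C^m$ explicitly: since the entries of $L_k$ are $e^{2\pi i a_k j\delta}$ for $j=0,\dots,m-1$,
$$\langle L_k, L_{k'}\rangle = \sum_{j=0}^{m-1} e^{2\pi i a_k j\delta}\,\overline{e^{2\pi i a_{k'} j\delta}} = \sum_{j=0}^{m-1} \bigl(e^{2\pi i (a_k-a_{k'})\delta}\bigr)^{j},$$
a geometric series with common ratio $r := e^{2\pi i (a_k-a_{k'})\delta}$.

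Next I would invoke the hypothesis $\delta(a_k-a_{k'})\notin\Z$, which is exactly what guarantees $r\neq 1$ (and, simultaneously, that the denominator on the right-hand side of the claimed identity does not vanish). Hence the series sums to
$$\frac{1-r^{m}}{1-r} = \frac{1-e^{2\pi i m(a_k-a_{k'})\delta}}{1-e^{2\pi i (a_k-a_{k'})\delta}}.$$

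Finally I would pass to absolute values and apply the elementary identity $|1-e^{i\theta}| = 2\,|\sin(\theta/2)|$, valid for every real $\theta$, to both numerator and denominator; the two factors of $2$ cancel and one obtains
$$|\langle L_k, L_{k'}\rangle| = \left\lvert \frac{\sin(\pi m(a_k-a_{k'})\delta)}{\sin(\pi(a_k-a_{k'})\delta)} \right\rvert,$$
which is the assertion. The sign of $a_k-a_{k'}$ plays no role, since it enters only through an overall sign inside the two absolute values.

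There is no real obstacle in this argument; the only matters requiring a moment's care are keeping track of the conjugation convention so that the exponents subtract (rather than add) and checking that the nondegeneracy hypothesis is precisely the condition needed to divide by $1-r$. One could alternatively recognize the sum as a Dirichlet-type kernel evaluated at $(a_k-a_{k'})\delta$, but the geometric-series computation is the most direct route.
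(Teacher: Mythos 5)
Your argument is correct and follows the same route as the paper: sum the geometric series with ratio $e^{2\pi i(a_k-a_{k'})\delta}$ (nonzero denominator guaranteed by the hypothesis) and convert the moduli via $|1-e^{i\theta}|=2|\sin(\theta/2)|$. The only difference is that you spell out the conjugation step and the role of the nondegeneracy hypothesis explicitly, which the paper leaves implicit.
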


  \subsection{ Proof of Theorem \ref{T-main}}
 
 To   prove Theorem \ref{T-main}, we  use most of the tools described in  sections 2. Our proofs consists of several steps:
 %approximate the domain $D$  in  Figure \ref{f1} with a   sequence of multi-rectangles that satisfy the assumptions of Lemma \ref{L-limit-bases}
\begin{figure}[ht]
	\begin{center}
		\begin{tikzpicture}

			\foreach \y in {0,1,...,3}{
				\foreach \x in {0,1,...,15} {
					\fill[blue!40!white,draw=black] (\x/4,\y/4) rectangle (\x/4+1/4,\y/4+1/4);
			}}

			\foreach \x in {0,1,...,7}{
				\foreach \y in {0,1,...,3} {
					\fill[blue!40!white,draw=black] (\x/4,\y/4+1+1/4+\x/4) rectangle (\x/4+1/4,\y/4+1+1/4+\x/4+1/4);
			}}
			
			\foreach \x in {0,1,...,7}{
				\foreach \y in {0,1,2,...,3} {
					\fill[blue!40!white,draw=black] (\x/4+2,4-1/4-\y/4-\x/4) rectangle (\x/4+2+1/4,4-1/4-\y/4-\x/4+1/4);
			}}

			\draw    (0.8,3.5) node [black]{{\small $R_{N,2} $}};
			\draw    (-0.5,0.5) node [black]{{\small$ R_{N,1} $}};
			
			\draw [red!80!white] (0,0) --(4,0)--(4,1)--(0,1)--(0,0);
			\draw [red!80!white] (0,1) --(0,2)--(2,4)--(4,2)--(4,1)--(2,3)--(0,1);  
		\end{tikzpicture}
	\end{center}
	\caption{Approximation of $D$ by $D_N$}\label{f2}
\end{figure}
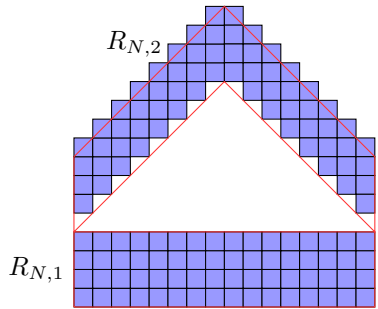

\noindent
{\textbf{a) Approximation}}: We introduce multi-rectangles  $\{D_{ N}\}_{N=2}^\infty = \{R_{N,1} \cup R_{N,2} \}_{N=2}^\infty $  with sides $\frac{1}{2^N}$ (see Figure \ref{f2}), and left bottom vertexes  in the sets $A_{N,1} \cup A_{N,2} $, where 
$ 
A_{N,1}  = \left\{\left(\frac{k}{2^N},\frac{l}{2^N}\right)\right\}_%
{\genfrac{}{}{0pt}{2}{0\leq k \leq 2^N-1}{0\leq l \leq \frac{2^N}{4}-1}}
$ 
and 
$$
 A_{N,2}  = \left\{\left(\frac{k}{2^N},\, \frac{1}{4}+\frac{k+l+1}{2^N}\right),\ 
 %\right\}
 % \bigcup \left\{
 \left(\frac{1}{2}+\frac{k}{2^N},\frac{2^N-2-k-l}{2^N}
 \right) \right\}_{\genfrac{}{}{0pt}{2}{0\leq k \leq \frac{2^N}{2}-1}{0\leq l \leq \frac{2^N}{4}-1}}.
 $$
 Letting $D$ be the   domain in Figure 3, $  Y_N= D-  D_N $ and $  W_N= D_N-D$, we can see at once that   $D_N\rightarrow D$, $\dsize\lim_{N\to\infty}  | Y_N|=0$, and $\dsize \lim_{N\to\infty}  | W_N|=0$.

The union of squares in the set $A_{N,1} $   forms the rectangle $R_1=R_{1, N}$ at the bottom of Figure 4.  Moreover, as $N\rightarrow \infty$,  the sequence $R_{1,N}$  convergences to $R_1$ when $N\rightarrow \infty$. Furthermore  $R_{N,1} =R_1=R_{N+1,1} $ for every $N$.
Similarly,  %for $R_2$,  
the  squares  in the sets $A_{N,2} $   form  the figures
$R_{N,2} $ which converges to $R_2$ when $N\rightarrow\infty$. Moreover,  our definitions  ensures that %these unions will coincide, i.e.  
$R_{N,2} \subset R_{ N+1, 2}$ for all $N$.

\medskip
\noindent
\textbf{b) Dilation}: Next, we apply  the linear transformation  %(stretching) 
$L(x,y)=(2^Nx,2^Ny)$  to obtain unions of squares
 $\{	\tilde{D}_{N}\}_{N=2}^\infty = \{	\tilde{R}_{N,1} \cup 	\tilde {R}_{N,2} \}_{N=2}^\infty $  with sides $1$, and left bottom vertexes in   $\tilde {A}_{N,1} \cup 	\tilde {A}_{N,2} $, where
$  
	\tilde {A}_{N,1}  = \left\{\left(k,l\right)\right\}_{\genfrac{}{}{0pt}{2}{0\leq k \leq 2^N-1}{0\leq l \leq \frac{2^{N}}{4}-1}}
$ 
and 
$$
	\tilde {A}_{N,2} \! =\! \left\{\left(k,\frac{2^N}{4}+k+l+1\right) , \ \left(\frac{2^N}{2}+k,2^N-2-k-l\right)\right\}_{\genfrac{}{}{0pt}{2}{0\leq k \leq \frac{2^N}{2}-1}{0\leq l \leq \frac{2^N}{4}-1}}.
$$

\medskip\noindent
\textbf{ c) Reduction to a one-dimensional problem}: Next, we transform our union of squares into a union of rectangles of height $1$, all aligned on the line $y=0$. To achieve this, we apply the transformation $L$ in Theorem \ref{T-Segm-rect}, with $v_k=k 2^N$ and $w_k=-k$, for $0\leq k\leq N-1$.  

  We represent this union as the Cartesian product of a union of intervals $J_N$ and the interval $[0,1)$. Note that $J_N$ is a union of intervals of length $1$ with left endpoints $L(	\tilde {A}_{N,1} \cup	\tilde {A}_{N,2} )$. More specific, $L(	\tilde {A}_{N,1} )=\{(0,0),(1,0),...,(2^{2N-2}-1,0)\}$ and 
  
  \vskip .7cm
  \noindent
$ 
    L(	\tilde {A}_{N,2} ) = \left\{\left(\frac{2^{2N}}{4}+2^N(k+l+1)+k,0\right), \  \left( 2^{2N}-2^N\left(k+l+\frac{3}{2}\right)+k,0\right)\right\}_{\genfrac{}{}{0pt}{2}{0\leq k \leq \frac{2^N}{2}-1}{0\leq l \leq \frac{2^N}{4}-1}}.
$ 

We claim that for $M=\frac{2^{2N}}{2}$
\begin{equation}
    \B_N= \bigcup_{j=0}^{\frac{M}{2}-1}\left\{e^{2 \pi i (n+\frac{2j}{M})x}, \ e^{2 \pi i (n+\frac{2j}{M}+\frac{1}{2M})x}\right\} _{n\in\Z}
\end{equation}
is a well-conditioned exponential basis for $L^2(J_N)$. To show it, we will use Theorem \ref{T1-N-interv}.

\noindent\textbf{d) Construction of the matrix $\G$}: First, we denote the set of endpoints of $J_N$ as $P_N=P_{N,1}\cup P_{N,2}$, where $P_{N,1}=\{0,1,...,2^{2N-2}-1\}$ and 
 $$
    P_{N,2} = \left\{\frac{2^{2N}}{4}+2^N(k+l+1)+k, \quad  2^{2N}-2^N\left(k+l+\frac{3}{2}\right)+k\right\}_{\genfrac{}{}{0pt}{2}{0\leq k \leq \frac{2^N}{2}-1}{0\leq l \leq \frac{2^N}{4}-1}}.
$$
Also, we denote the set of frequencies by $\Delta_N=\Delta_{N,1}\cup\Delta_{N,2}$, where $\Delta_{N,1}=\{\frac{2j}{M}\}_{0\leq j\leq \frac{M}{2}-1}$ and $\Delta_{N,2}=\{\frac{2j}{M}+\frac{1}{2M}\}_{0\leq j\leq \frac{M}{2}-1}$.  

\medskip
 In view of the applications of  Theorem \ref{T1-N-interv},   we need to evaluate the maximum and minimum singular values of the matrix $\G_N=\{ e^{2\pi i  \delta_j p_k}\}_{1\leq j,k\leq M}$, where $\delta_j$ and  $p_k$ are the elements of $\Delta_N$ and $P_N$, respectively. Note that $|\Delta_N|=|P_N|=M$. 
%
%Before proceeding with our investigation of the matrix $\G_N$, 
To do so, we  make several observations that will  significantly simplify our calculations. First of all,  %$P_{N,1}=\Z_{M/2}$, so 
the union of  the intervals with one of the  endpoints in $P_{N,1}$  is   the  interval $[0,M/2)$. Thus, the sets  $\ \bigcup_{j=0}^{\frac{M}{2}-1}\left\{e^{2 \pi i (n+\frac{2j}{M})}\right\}_{n\in\N}$ and   $ \bigcup_{j=0}^{\frac{M}{2}-1}\left\{e^{2 \pi i (n+\frac{2j}{M}+\frac{1}{2M})}\right\}_{n\in\N}$ are orthogonal bases for $L^2([0,M/2))$. Also, by Theorem \ref{T1-N-interv} the matrix   $\G_{N,1}$   (which is a sub-matrix of $\G_N$) for which  $p_k\in P_{N,1}$ and  either $\delta_i\in \Delta_{N,1}$ or $\delta_i\in \Delta_{N,2}$ is orthogonal.

	  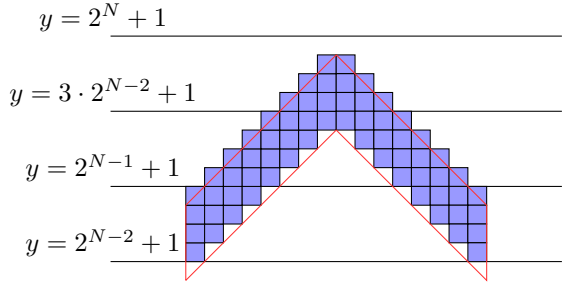
\begin{figure}[ht]
  \begin{center}
     \begin{tikzpicture}
      
      \draw [black] (-1,1.25) --(5,1.25);
      \draw [black] (-1,2.25) --(5,2.25);
      \draw [black] (-1,3.25) --(5,3.25);
      \draw [black] (-1,4.25) --(5,4.25);

   \foreach \x in {0,1,...,7}{
\foreach \y in {0,1,...,3} {
        \fill[blue!40!white,draw=black] (\x/4,\y/4+1+1/4+\x/4) rectangle (\x/4+1/4,\y/4+1+1/4+\x/4+1/4);
   }}

      \foreach \x in {0,1,...,7}{
\foreach \y in {0,1,...,3} {
        \fill[blue!40!white,draw=black] (\x/4+2,4-1/4-\y/4-\x/4) rectangle (\x/4+2+1/4,4-1/4-\y/4-\x/4+1/4);
   }}

 \draw    (-1.1,1.5) node [black]{{\small $y=2^{N-2}+1$}};
 \draw    (-1.1,2.5) node [black]{{\small $y=2^{N-1}+1$}};
 \draw    (-1.1,3.5) node [black]{{\small $y=3\cdot 2^{N-2}+1$}};
 \draw    (-1.1,4.5) node [black]{{\small $y=2^N+1$}};
   \draw [red!80!white] (0,1) --(0,2)--(2,4)--(4,2)--(4,1)--(2,3)--(0,1);
      \end{tikzpicture}
   \end{center}
   \caption{Union of cubes generated by $A_{N,2} $}
 \end{figure}

We claim that also the matrix $\G_{N,2}$  for which  $p_k\in P_{N,2}$ and  either $\delta_i\in \Delta_{N,1}$ or $\delta_i\in \Delta_{N,2}$ is orthogonal.  %, because $P_{N,2}\equiv \Z_{M/2}$ mod $M/2$. 
To prove this, we return to  the two-dimensional figure. According to our construction,  the set $P_{N,2}$ in one dimension is associated to  $	\tilde {A}_{N,2} $ in two dimensions. All unit squares left bottom vertexes in $	\tilde {A}_{N,2} $ are distributed across four levels, as shown in Figure 5. Each level corresponds to a rectangle with area $\frac{M}{2}$. Moreover, the transformation $ (x,y)\to (x,\  y \mod{2^{N-2}})$  transforms Figure 5  into the    rectangle $A'_{N,2}=[0,2^N)\times[0,2^{N-2})$. Thus, $A'_{N,2} $ is equivalent  to $\{0, 1, ...\,  \frac  M2-1\}$ when transformed into a one dimensional set as we did in the previous step, and the same is true of  $P_{N,2}$.  

 Thus, the  matrix  $\G_{N,2} $ % obtained with the  endpoints $P_{N,2}$ and frequencies $\Delta_{N,1}$ or $\Delta_{N,2}$ 
 is orthogonal for all $N$.

\medskip
\noindent
\textbf{ e) Estimating   the singular values}: We consider again the matrix $\G_N=\{ e^{2\pi i  \delta_j p_k}\}_{1\leq j,k\leq M}$, with $\delta_j$ and  $p_k$ in $\Delta_N$ and $P_N$, respectively. 
We denote the columns of $\G_N$ with $c_{p_1},\, ..., \, c_{p_{M}}$.

To estimate the singular values of this matrix, we apply Lemma \ref{L-Lemma5.1} and the clustering method described in Subsection \ref{Section-EBandMatrices}. We distribute  the elements of $P_N$ in $\frac{M}{2}$ clusters such that $K_u := \{p\in P_N:p\equiv u-1 \mod \frac{M}{2} \}$ for $1\leq  u\leq \frac{M}{2}$. The fact that $P_{N,1}\equiv P_{N,2} \mod{\frac M2}$  and that $P_{N,1}=\{0, 1, \,... \frac M2-1\}$ ensures that each cluster will contain exactly two points. 

We   show that these clusters are pairwise orthogonal, i.e., the columns of the matrix $\G_N$ associated to the endpoints from two different clusters and frequencies in  $  \Delta_{N}$ are orthogonal. Let $p\neq p' \mod\frac{M}{2}$; 
%we consider the column vectors $c_p$ and $c_{p'}$ generated using frequencies $\Delta_{N}$. Thus, the inner product between this pair of vectors by Lemma \ref{L-sum_of exponents} will be
%
 By Lemma \ref{L-sum_of exponents}, the inner product of the column vectors $c_p$ and $c_{p'}$  is 
\begin{align*}
    |\langle c_p,c_{p'}\rangle | &= \left|\sum_{j=0}^{\frac{M}{2}-1} e^{2\pi i (p-p')\frac{2j}{M}} + e^{2\pi i (p-p')\frac{1}{2M}}\sum_{j=0}^{\frac{M}{2}-1} e^{2\pi i (p-p')\frac{2j}{M}}  \right|   \\
            &=  \left\lvert \frac{1-e^{2 \pi i (p-p')}}{1-e^{2 \pi i (p-p')\frac{2}{M}}} + e^{2\pi i (p-p')\frac{1}{2M}} \frac{1-e^{2 \pi i (p-p')}}{1-e^{2 \pi i (p-p')\frac{2}{M}}}\right\rvert=0,
\end{align*}
because $1-e^{2 \pi i (p-p')\frac{2}{M}}=0$ if and only if $p= p' \mod\frac{M}{2}$.

To apply Lemma \ref{L-Lemma5.1} we need to estimate singular values of all matrices associated to  elements of $P_N$  from the same cluster and frequencies in  $\Delta_{N}$. Let $p,p'\in K_u$, we introduce the   matrix $G_{K_u}=[c_p\,c_{p'}]$. $G_{K_u}$ is a $2\times M$ matrix with   singular values   equal to  the square root of the eigenvalues of the following matrix
$$
S_{K_u}=G_{K_u}^*G_{K_u}= \begin{pmatrix}
\langle c_p,c_p\rangle & \langle c_p,c_{p'}\rangle \\
\langle c_{p'},c_p\rangle & \langle c_{p'},c_{p'}\rangle
\end{pmatrix} = \begin{pmatrix}
M & \langle c_p,c_{p'}\rangle \\
\langle c_p,c_{p'}\rangle & M
\end{pmatrix}.
$$
The characteristic polynomial of $S_{K_u}$  is
$$
\lambda^2 - 2 M \lambda + M^2 - (\langle c_p,c_{p'}\rangle)^2=0.
$$
From  this quadratic equation we obtain, 
\begin{equation*}
    {\sigma'}_{1 }^2=\lambda_{1 }=M + |\langle c_p,c_{p'}\rangle|, \quad {\sigma'}_{ 2}^2=\lambda_{ 2}=M - |\langle c_p,c_{p'}\rangle|. 
\end{equation*}
Now, using the fact that $p= p' \mod\frac{M}{2}$, we calculate
\begin{align*}
    |\langle c_p,c_{p'}\rangle|&= \left|\sum_{j=0}^{\frac{M}{2}-1} e^{2\pi i (p-p')\frac{2j}{M}} + e^{2\pi i (p-p')\frac{1}{2 M}}\sum_{j=0}^{\frac{M}{2}-1} e^{2\pi i (p-p')\frac{2j}{M}}  \right|   \\
    &=\frac{M}{2} |1+e^{2\pi i (p-p')\frac{1}{2M}}| = M \left|\cos{\left(\pi  \frac{(p-p')}{2 M}\right)}\right|= M \left|\cos{\left(\frac{\pi z}{4}\right)}\right|,
\end{align*}
where $z\in\N$ and  $p- p' =\frac{M z}{2}=\frac{2^{2N} z}{4}$. Note that %the set of all endpoints 
$P_N$ is inside the interval $[0,2^{2N-2})$ so $z=1,2,$ or $3$ for $N\geq 2$. Thus, $ |\langle c_p,c_{p'}\rangle|\leq M \cos{\left(\frac{\pi }{4}\right)}=\frac{\sqrt{2} M}{2}$, and
$ 
    {\sigma'}_{1}^2  \leq \frac{2+\sqrt{2} }{2}  M 
\quad \mbox{
and}\quad
{\sigma'}_{2}^2 \geq \frac{2-\sqrt{2} }{2}  M .
$ 

Finally, we use  the inequality \eqref{ineq-lemma-3} with $\alpha=0$ in  Lemma \ref{L-Lemma5.1} to estimate the singular values of the matrix $\G_N$. We obtain:   
$$
    {\sigma}_{1}^2  \leq \frac{2+\sqrt{2} }{2}  M    
 \quad \mbox{
and}\quad
{\sigma}_{N}^2 \geq \frac{2-\sqrt{2} }{2}  M .
$$
Therefore, $\B_N$ is a Riesz basis for $L^2(J_N)$ with frame  bounds
 $$
    B_N  \leq \frac{2+\sqrt{2} }{2}  M, \quad       
A_N \geq \frac{2-\sqrt{2} }{2}  M  .
$$

Lemma \ref{L-cart-prod1} guarantees that  for any $j\leq \frac M2-1$ and any  choice of the   $\{\omega_{n,j}, \ \omega'_{n, j}\}_{n\in\Z}$,  the set 
\begin{align*}
    	\tilde {\B}_N= \bigcup_{j=0}^{\frac{M}{2}-1}\left\{e^{2 \pi i ((n+\frac{2j}{M})x+(m+\omega_{n,j})y)}, \quad e^{2 \pi i ((n+\frac{2j}{M}+\frac{1}{2M})x+(m+\omega'_{n,j})y)}\right\}_{n, m \in\Z} 
\end{align*}
is a Riesz basis for $L^2(J_N\times[0,1))$ with Riesz constants $A_N$ and $B_N$. However, to satisfy condition \eqref{e-tB} in Theorem \ref{T-Segm-rect} we select  $\omega_{n,j}=\frac{2j}{M}$ and $\omega'_{n,j}=\frac{2j}{M}+\frac{1}{2M}$. Thus, by Theorem \ref{T-Segm-rect}, $ 	\tilde {\B}_N$ is a Riesz basis for $L^2(	\tilde {D}_N)$ with Riesz constants $A_N$ and $B_N$. Next, we  use the inverse of the transformation $L$ defined in Step c), which maps $L^2(\tilde {D}_N)$ to $L^2(D_N)$,  and Lemma \ref{L-dil-basis}  to  obtain that
$$
    \B'_N =\bigcup_{j=0}^{\frac{M}{2}-1}\left\{e^{2 \pi i ((2^Nn+\frac{4 j}{2^N})x+(2^Nm+\frac{4 j}{2^N})y)}, \ e^{2 \pi i ((n+\frac{4 j}{2^N}+\frac{2}{2^N})x+(m+\frac{4 j}{2^N}+\frac{2}{2^N})y)}\right\}_{n, m\in\Z} 
$$
is a Riesz basis for $L^2(D_N)$  with frame  bounds 
$$
    B_N'  \leq   \frac{2+\sqrt{2} }{2} \frac{M}{2^{2N}} =    \frac{2+\sqrt{2} }{4} ;  \quad
A'_N \geq \frac{2-\sqrt{2} }{2} \frac{M}{2^{2N}} =    \frac{2-\sqrt{2} }{4}. 
$$
Note that choosing $M=2^N$ guarantees that for $N\geq 2$ we have $\B_{N}'\subset \B_{ {N+1}}'$. Therefore, the conditions of Lemma \ref{L-limit-bases} are satisfied, and
$ 
    \B'=\bigcup_{N=2}^{\infty} \B'_N
$
is an exponential basis for $L^2(D)$ with frame constants $A= \frac{2-\sqrt{2} }{4} $ and $B=\frac{2+ \sqrt{2} }{4} $.

\section *{Section 4}

We  have collected tools  and techniques  that can be used to construct exponential bases with explicit frame constants, and we feel that the examples in sections 2 and 3 can be used as a model for more general constructions.
The domains in our examples  multi-tile the plane, but in principle, our method allows the construction of bases on any set. The challenge is that exponential bases on unions of rectangles approximating these sets may become ill-conditioned, which would  not allow to apply  Lemma \ref{L-limit-bases}.

Theorem \ref{T-main} and Proposition \ref{P-compl-Rs} could potentially be used to construct an exponential basis on a triangle (assuming  that such basis exist).  One way to achieve this could be  to find an exponential  basis  ${ \cal U}$ on the union of the domain  $D$ and the inner triangle  in Figure 3 which contains the basis $\B$  in Theorem \ref{T-main};  if  the frame  constants  of this basis  satisfy the conditions of Proposition \ref{P-compl-Rs},    we can obtain a basis on the inner triangle in figure 3.  
Recall that every   triangles can be obtained from a given triangle via a linear transformation of the plane.
 However, constructing a  basis on  such figure  (and in general, on any planar domain that does not multi-tile) poses   nontrivial challenges and remains an open problem.

\end{document}